\newsavebox{\@brx}
\newcommand{\lla}[1][]{\savebox{\@brx}{\(\m@th{#1\langle}\)}%
\mathopen{\copy\@brx\kern-0.5\wd\@brx\usebox{\@brx}}}
\newcommand{\rra}[1][]{\savebox{\@brx}{\(\m@th{#1\rangle}\)}%
\mathopen{\copy\@brx\kern-0.5\wd\@brx\usebox{\@brx}}}
\newcommand{\im}{{\operatorname{im}}}
\newcommand{\jump}[1]{{\llbracket #1 \rrbracket}}
\theoremstyle{plain}
\newtheorem{theorem}{Theorem}
\newtheorem{lemma}[theorem]{Lemma}
\newtheorem{corollary}[theorem]{Corollary}
\theoremstyle{definition}
\newtheorem{remark}[theorem]{Remark}
\newtheorem{definition}[theorem]{Definition}
\newcommand{\ie}{i.\,e.}
\newcommand{\eg}{e.\,g.}
\newcommand{\spann}{\mathrm{span}\,}
\newcommand{\p}{\varphi}
\newcommand{\Om}{\Omega}
\newcommand{\pOm}{{\partial \Omega}}
\newcommand{\Th}{\mathcal{T}_h}
\newcommand{\Tnoh}{\mathcal{T}}
\newcommand{\T}{\mathcal{T}}
\newcommand*{\llbrace}{\ensuremath{\lbrace\hspace*{-0.235em}\vert}}
\newcommand*{\rrbrace}{\ensuremath{\vert\hspace*{-0.23em}\rbrace}}
\newcommand*{\avg}[1]{\ensuremath{\llbrace{#1}\rrbrace}}
\newcommand{\cupw}[1]{\ensuremath{\vecc_\mathrm{upw}(#1)}}
\newcommand{\R}{\mathbb{R}}
\newcommand{\N}{\mathbb{N}}
\newcommand{\eps}{\varepsilon}
\renewcommand{\nu}{{\mathfrak n}}
\renewcommand{\div}{\operatorname{div}}
\newcommand{\vecc}{\boldsymbol{c}}
\newcommand{\vecK}{\boldsymbol{K}}
\newcommand{\vecp}{\boldsymbol{p}}
\newcommand{\vecnu}{\boldsymbol{\nu}}
\newcommand{\vecxi}{\boldsymbol{\xi}}
\newcommand{\bzero}{\boldsymbol{0}}
\providecommand{\talpha}{\tilde{\alpha}}
\providecommand{\tg}{g}
\providecommand{\wtGamma}{\Gamma}
\title{Error estimates for completely discrete FEM in energy-type and weaker norms}
\author{Lutz Angermann\thanks{%
Dept.~of Mathematics, Clausthal University of Technology, Erzstr.~1, D-38678 Clausthal-Zellerfeld, Germany,
lutz.angermann@tu-clausthal.de},
Peter Knabner\thanks{%
Dept.~of Mathematics, University of Erlangen-Nuremberg, Cauerstr.~11, D-91058 Erlangen, Germany,
knabner@math.fau.de},
Andreas Rupp\thanks{%
School of Engineering Science, Lappeenranta-Lahti University of Technology, P.O.Box 20, FI-53851 Lappeenranta,
Finland,
Andreas.Rupp@lut.fi}}
\date{}
\begin{document}

\maketitle

\begin{abstract}
The paper presents error estimates within a unified abstract framework for the analysis of FEM
for boundary value problems with linear diffusion-convection-reaction equations
and boundary conditions of mixed type.
Since neither conformity nor consistency properties are assumed, the method
is called completely discrete. We investigate two different stabilized discretizations
and obtain stability and optimal error estimates in energy-type norms and,
by generalizing the Aubin-Nitsche technique, optimal error estimates in weaker norms.
\end{abstract}

\noindent\textsf{Keywords:} Strang lemma, consistency, error estimate, Aubin-Nitsche technique,
discontinuous Galerkin method

\noindent\bigskip
\textsf{AMS Subject Classification (2022):}
65\,N\,12, %
65\,N\,30, %
46\,B\,10  %

\section{Introduction}

In this paper we present a unified approach to the analysis of FEM
for boundary value problems with linear elliptic differential equations
of the second order, where, in addition to the diffusion-convection-reaction structure
of the partial differential equation, mixed type boundary conditions
(first, second, third kind) are taken into account.
We allow completely discrete formulations in the sense that the discrete FE spaces
are not necessarily embedded into the spaces of the weak formulation of the boundary
value problem and -- based on a suitable notion of consistency -- that the
discrete problems do not have to be consistent.
In addition to stability and error estimates in energy-type norms,
a generalization of the Aubin-Nitsche technique
for obtaining error estimates in weaker norms is discussed.

Selected aspects of our exposition are not entirely new or even not really profound;
this applies, for example, to our version of Strang's second lemma \cite{Strang:72b},
which is circulating in the literature in several, slightly different versions.
There are also other variants of the generalization of the duality argument;
we refer here -- also for an overview -- to \cite{DiPietro:18}.
The abstract results obtained are applied to two concrete discretizations
-- a Crouzeix-Raviart discretization of order one and more general
discontinuous Galerkin methods of the IPG type.
For both cases we discuss the stability, consistency and convergence properties
that result from the general theory.

As for the theoretical aspects, the work already mentioned \cite{DiPietro:18}
and its extension \cite{DiPietro:21} are perhaps the most closely related publications
to our work.
Compared to these, we prefer not to include an  interpolation (or quasi-interpolation) operator
in the consistency definition,
but rather require that the discrete bilinear form can be extended in such a manner
that the solution of the continuous problem
(which often has more regularity than the elements of spaces in which
the weak formulation of the boundary value problem is given), belongs to the extended domain of
definition (as in \cite[Def.~1.31]{DiPietro:12}).
Furthermore, the extended paper \cite{DiPietro:21} applies its theory
only to a pure diffusion problem under homogeneous Dirichlet boundary conditions.

The paper is structured as follows.
In the subsequent section we present, in an abstract framework, error estimates
for the solution of discretized variational equations in both energy-type norms and weaker norms,
whereby neither conformity nor consistency are assumed.
Then, in Section \ref{sec:modelproblem}, we describe the model problem, the solution of which is to be approximated,
and the most important prerequisites. The model problem is a boundary value problem for
a scalar diffusion-convection-reaction equation with boundary conditions of mixed type.
Sections \ref{sec:NonconformingFE} and \ref{sec:DGM} describe the specific discretizations,
including their stabilization mechanisms,
and the theoretical results from Section \ref{sec:GM_VarEq} are applied.
In both situations and under reasonable assumptions, optimal error estimates are obtained.

\section{General variational equations}
\label{sec:GM_VarEq}
Given two real Banach spaces $(U,\|\cdot\|_U)$, $(V,\|\cdot\|_V)$,
this paper is concerned with the application of finite element methods
to approximate the solution to the following problem:
\begin{equation}\label{EQ:VariationalFormulationGeneral}
\text{Find } u \in U \text{ such that} \quad a(u,v) = \ell(v)
\quad\text{for all } v \in V,
\end{equation}
where throughout the paper $\ell:\; V \to \R$ is a continuous linear form, and
$a:\; U \times V \to \R$ is a continuous bilinear form.

In the above setting, the following result about the existence of a unique solution is known.

\begin{theorem}\label{th:BanachNecasBabuska}
Let $V$ be reflexive.
Then the problem
\eqref{EQ:VariationalFormulationGeneral} is uniquely solvable
for every right-hand side $\ell\in V'$
if and only if the following two conditions are satisfied:
\begin{align}
& \alpha := \inf_{{u \in U\setminus\{0\}}} \sup_{{v \in V\setminus\{0\}}}
\frac{a(u,v)}{\|u\|_{U} \|v\|_{V}} > 0,
\label{eq:vfg1}\\
& a(u,v) = 0 \quad\text{for all } u \in U \quad \Longrightarrow \quad v = 0
\quad\text{for } v \in V.
\label{eq:vfg2}
\end{align}
If both conditions are met,
the solution $u\in U$ of \eqref{EQ:VariationalFormulationGeneral}
satisfies the stability estimate
\[
\|u\|_{U} \le \frac{1}{\alpha} \|\ell\|_{V'}.
\]
\end{theorem}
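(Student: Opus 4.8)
The plan is to recast the variational equation as an operator equation and then read off both conditions from standard functional-analytic principles. Introduce the bounded linear operator $A\colon U \to V'$ defined by $\langle Au, v\rangle = a(u,v)$; boundedness of $A$ is exactly the continuity of $a$. Then \eqref{EQ:VariationalFormulationGeneral} reads $Au = \ell$, so "\eqref{EQ:VariationalFormulationGeneral} is uniquely solvable for every $\ell \in V'$" means precisely that $A$ is bijective. I would prove the theorem by establishing two equivalences: (i) condition \eqref{eq:vfg1} holds if and only if $A$ is injective with closed range, and in that case the quantitative bound $\|Au\|_{V'} \ge \alpha\|u\|_U$ holds for all $u\in U$; (ii) using reflexivity of $V$, condition \eqref{eq:vfg2} holds if and only if $\im A$ is dense in $V'$.

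For (i): rewriting the inner supremum in \eqref{eq:vfg1} as a dual norm shows that \eqref{eq:vfg1} is literally the statement $\|Au\|_{V'}\ge \alpha\|u\|_U$ for all $u$. This at once gives injectivity, and it gives closedness of $\im A$ because $U$ is complete: if $Au_n$ is Cauchy in $V'$, then $u_n$ is Cauchy in $U$ by the lower bound, hence convergent, and one passes to the limit by continuity of $A$. Conversely, if $A$ is injective with closed range, then $A$ is a continuous bijection onto the Banach space $\im A$, so $A^{-1}\colon \im A \to U$ is bounded by the bounded inverse theorem, and $\|u\|_U = \|A^{-1}(Au)\|_U \le \|A^{-1}\|\,\|Au\|_{V'}$ is the required lower bound with $\alpha \ge \|A^{-1}\|^{-1} > 0$.

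For (ii): by Hahn--Banach, $\im A$ is dense in $V'$ if and only if its annihilator $(\im A)^\perp \subseteq V''$ is trivial. This is the one point where reflexivity of $V$ is used: identifying $V''$ with $V$ via the canonical isometry, an element of $(\im A)^\perp$ is exactly a $v\in V$ with $\langle Au, v\rangle = a(u,v) = 0$ for all $u\in U$, so $(\im A)^\perp = \{0\}$ is nothing but condition \eqref{eq:vfg2}. Without reflexivity the annihilator would live in the strictly larger space $V''$ and \eqref{eq:vfg2} would be too weak to force density; this is the step to be most careful about, though it is still routine.

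Finally I would assemble the pieces. If \eqref{eq:vfg1} and \eqref{eq:vfg2} both hold, then by (i) and (ii) the subspace $\im A$ is simultaneously closed and dense in $V'$, hence $\im A = V'$, so $A$ is surjective; together with injectivity from (i), $A$ is bijective, giving a unique $u = A^{-1}\ell$ for every $\ell\in V'$, and the stability estimate follows from $\alpha\|u\|_U \le \|Au\|_{V'} = \|\ell\|_{V'}$. Conversely, if $A$ is bijective then $A^{-1}$ is bounded (bounded inverse theorem), which yields \eqref{eq:vfg1}, and surjectivity makes $\im A = V'$ trivially dense, which yields \eqref{eq:vfg2} via (ii). All the analytic input is the Hahn--Banach theorem, the open mapping/bounded inverse theorem, and completeness of $U$ and $V'$, so beyond the bookkeeping of duals there is no serious obstacle.
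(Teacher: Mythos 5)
Your proposal is correct and follows essentially the same route as the paper: recasting the problem as $Au=\ell$, identifying \eqref{eq:vfg1} with injectivity plus closed range (and the bound $\|Au\|_{V'}\ge\alpha\|u\|_U$), and using reflexivity to identify \eqref{eq:vfg2} with density of $\im(A)$. The paper merely states these three steps without proof, so your write-up supplies exactly the omitted details.
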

\begin{proof}
First we note that the variational equation \eqref{EQ:VariationalFormulationGeneral}
can be reformulated as an operator equation:
\begin{equation*}
A u = \ell,
\end{equation*}
where $A:\; U \to V'$ is defined by means of the relationship
$(Au)(v) := a(u,v)$ for all $u\in U$, $v\in V$.
The continuity of $a$ implies the continuity of $A$.

The assertion follows from the following chain of arguments,
but we omit their detailed proofs.
\begin{enumerate}[1)]
\item
Let $U,V$ be normed spaces only. Then:
\[
\eqref{eq:vfg1} \quad \Longleftrightarrow \quad A^{-1}:\;\im{(A)}\to U
\text{ exists and is continuous}.
\]
\item
If, in addition to the assumption in 1), $U$ is complete, \ie\ a Banach space,
then $\overline{\im{(A)}} = \im{(A)}$.
\item
Let, in addition to the assumptions in 2), $V$ be a reflexive Banach space.
Then:
\[
\eqref{eq:vfg2} \quad \Longleftrightarrow \quad \im{(A)}=V'.
\]
\end{enumerate}
\end{proof}
To discretize the problem \eqref{EQ:VariationalFormulationGeneral} formally we consider
real Banach spaces $(U_h,\|\cdot\|_{U_h})$,
\linebreak
$(V_h,\|\cdot\|_{V_h})$
of the same finite dimension,
a bilinear form $a_h:\; U_h \times V_h \to \R$ and
a linear form $\ell_h:\; V_h \to \R$.
Here the index $h$ is a positive parameter
(typically an element of a sequence of positive real numbers with accumulation point 0)
such that the dimension of $U_h$ and $V_h$ increases unbounded as $h$ approaches zero.
The corresponding discrete problem reads as follows:
\begin{equation}\label{eq2018:6.12}
\text{Find } u_h \in U_h \text{ such that } \quad
a_h(u_h,v_h) = \ell_h(v_h) \quad\text{for all } v_h \in V_h.
\end{equation}
We call the discretization \eqref{eq2018:6.12} \emph{conforming},
if $U_h \subset U$ as well as $V_h \subset V$, otherwise \emph{nonconforming}.
In the conforming case, we set
$\|\cdot\|_{U_h}:=\|\cdot\|_U$ and $\|\cdot\|_{V_h}:=\|\cdot\|_V$
unless differently specified.

In the analysis of the nonconforming case, the \emph{augmented spaces}
\[
U(h) := U + U_h
\quad\text{and}\quad
V(h) := V + V_h
\]
will be useful.
This implicitly assumes the existence of linear superspaces
for $U,U_h$ and $V,V_h$, respectively.
Furthermore we assume that the spaces $U(h)$, $V(h)$ are equipped
with norms $\|\cdot\|_{U(h)}$, $\|\cdot\|_{V(h)}$, respectively.
It is often desirable to take advantage of additional knowledge about the solution
of the problem \eqref{EQ:VariationalFormulationGeneral}, \eg\ certain regularity properties.
In such a case it is natural to introduce a proper subspace, say $W\subset U$,
as the solution space, which may have a stronger topology.

\begin{definition}\label{def:consistentapprox}
Let $u\in W\subset U$ be the solution of the problem \eqref{EQ:VariationalFormulationGeneral}.
The discrete formulation \eqref{eq2018:6.12} is called \emph{consistent}
on the subspace $W\subset U$,
if the discrete bilinear form $a_h$ can be extended onto the product space $(W+U_h) \times V_h$
(keeping the notation $a_h$) such that
\[
a_h(u,v_h) = \ell_h(v_h) \quad\text{for all } v_h \in V_h
\]
holds.
Otherwise the discrete formulation \eqref{eq2018:6.12} is called \emph{nonconsistent}.
\end{definition}
If the discrete formulation \eqref{eq2018:6.12} is consistent,
a solution $u_h \in U_h$ has the following property of \emph{Galerkin orthogonality}:
\begin{equation}\label{eq:Galorthogonality}
a_h(u-u_h, v_h) = 0 \quad\text{for all } v_h \in V_h.
\end{equation}
This property is lost in the nonconsistent situation,
since additional terms appear on the right-hand side, which are generally nontrivial.

An extension of the standard convergence analysis for the consistent conforming case
is given by the following generalization of Strang's second lemma.
Extensions of this kind can be found in the literature
(\eg\ \cite{DiPietro:12}, \cite{Cangiani:17})
but in fact all of these results (including our subsequent Theorem~\ref{th:Strang2})
are not very deep and rather technical in nature, but allow the analysis
of more general finite element (and related) methods.
\begin{theorem}\label{th:Strang2}
Let $u\in W\subset U$ be the solution of the problem \eqref{EQ:VariationalFormulationGeneral}.
Assume that the norm $\|\cdot\|_{U_h}$ can be extended to a norm on $W+U_h$
(keeping the notation $\|\cdot\|_{U_h}$),
and the condition
\begin{equation}\label{EQ:NB2h}
\alpha_h :=
\inf_{u_h \in U_h\setminus\{0\}}
\sup_{v_h \in V_h\setminus\{0\}}
\frac{a_h(u_h,v_h)}{\|u_h\|_{U_h}\|v_h\|_{V_h}} > 0
\end{equation}
is satisfied.

Further assume that the discrete bilinear form $a_h$ can be continuously extended onto
the product space $\left(W+U_h,\|\cdot\|_{U(h)} \right) \times \left( V_h, \|\cdot\|_{V_h} \right)$
(keeping the notation $a_h$),
\ie, there exists a constant $\widetilde{M}_h \ge 0$ such that
\begin{equation}\label{eq:contah3}
|a_h(w, v_h)| \le \widetilde{M}_h \| w \|_{U(h)} \| v_h \|_{V_h}
\quad\text{for all } w \in W+U_h,\ v_h \in V_h.
\end{equation}
Then the estimate
\begin{align*}
\| u-u_h \|_{U_h}
&\le\inf_{w_h \in U_h}
\left(\frac{\widetilde{M}_h }{\alpha_h}\| u-w_h \|_{U(h)} + \| u-w_h \|_{U_h} \right)
+ \frac{1}{\alpha_h}\sup_{v_h \in V_h\setminus\{0\}}
\frac{| a_h(u, v_h) - \ell_h(v_h) |}{\| v_h \|_{V_h}}
\end{align*}
holds.
\end{theorem}

\begin{proof}
From \eqref{EQ:NB2h}, \eqref{eq:contah3} and
\begin{align*}
a_h(u_h - w_h, v_h)
=\ell_h(v_h) - a_h(u,v_h) + a_h(u-w_h,v_h)
\quad\text{for any }w_h\in U_h
\end{align*}
it follows immediately that
\begin{align*}
\alpha_h \| u_h - w_h \|_{U_h}
\le \sup_{v_h \in V_h\setminus\{0\}}
\frac{| a_h(u,v_h) - \ell_h(v_h) |}{\| v_h \|_{V_h}}
+ \widetilde{M}_h \| u - w_h \|_{U(h)}.
\end{align*}
The triangle inequality
$\|u-u_h\|_{U_h} \le \|u-w_h\|_{U_h} + \|w_h-u_h\|_{U_h}$
yields the result.
\end{proof}

\begin{remark}\phantomsection\label{folg:DG_analysis_conv}
\begin{enumerate}[1)]
\item
Let the assumptions of Theorem~\ref{th:Strang2} be satisfied.
If there is a constant $\widetilde{C}_h>0$ such that
\[
\| w_h \|_{U(h)} \le \widetilde{C}_h \| w_h \|_{U_h}
\quad\text{for all } w_h \in U_h,
\]
we can also apply the triangle inequality w.r.t.\ the $U(h)$-norm and conclude
\[\| u-u_h \|_{U(h)}
\le \left(1+ \frac{ \widetilde{C}_h \widetilde{M}_h }{\alpha_h}\right)
\inf_{w_h \in U_h}\| u-w_h \|_{U(h)}
+ \frac{\widetilde{C}_h}{\alpha_h}\sup_{v_h \in V_h\setminus\{0\}}
\frac{| a_h(u, v_h) - \ell_h(v_h) |}{\| v_h \|_{V_h}}\,.
\]
\item
Let the assumptions of Theorem~\ref{th:Strang2} be satisfied.
If there is a constant $\widetilde{C}_h>0$ such that
\[
\| w_h \|_{U_h} \le \widetilde{C}_h \| w_h \|_{U(h)}
\quad\text{for all } w \in W + U_h,
\]
then the estimate
\[
\|u - u_h\|_{U_h} \le \left(\widetilde{C}_h + \frac{\widetilde{M}_h}{\alpha_h} \right)
\inf_{w_h \in U_h} \| u - w_h \|_{U(h)}
+ \frac{1}{\alpha_h}\sup_{v_h \in V_h\setminus\{0\}}
\frac{| a_h(u, v_h) - \ell_h(v_h) |}{\| v_h \|_{V_h}}
\]
holds.
\end{enumerate}
\end{remark}

\subsubsection*{Error estimates in weaker norms}
In the standard finite element literature for second-order linear elliptic
boundary value problems, the so-called
Aubin-Nitsche duality argument \cite{Aubin:67}, \cite{Nitsche:68}
is used to establish error estimates in norms which are weaker than
the natural energy norm (or equivalent norms), typically in the $L^2$-norm.
The main ingredient is an auxiliary variational problem of the form

\medskip\quad
Find $v \in V$ such that
\begin{equation}\label{eq:adjoint}
a(w,v) = \tilde\ell(w) \quad\text{for all } w \in U,
\end{equation}
where $\tilde\ell \in U'$ is a suitably chosen continuous linear form.

Here we extend this setting to a more general framework,
in which the occurring discrete spaces no longer have to be subspaces
of the ``continuous'' spaces $U,V$ of the weak formulation \eqref{EQ:VariationalFormulationGeneral},
and the discretization does not necessarily have to be consistent.
This abstract framework is applied to two examples of nonconforming FEM for
diffusion-convection-reaction equations in Sections \ref{sec:NonconformingFE} and \ref{sec:DGM}.
However, the range of models and numerical techniques covered by the analytical framework
goes well beyond these examples.

We basically assume here that there exists a reflexive Banach space $Z$ such that
$U+U_h \subset Z'$ and denote by $\lla \cdot, \cdot \rra:\;Z'\times Z''\to\R$
the duality pairing on $Z'\times Z''$.
Thanks to the reflexivity of $Z$ we may identify the bidual space $Z''$ with $Z$: $Z'' \cong Z$.

Next we specify the right-hand side of the adjoint variational problem \eqref{eq:adjoint} as
\begin{equation*}
\tilde\ell(w) := \tilde\ell_{g}(w) := \lla w , \, g \rra
\quad\text{for all } 
w \in U,
\end{equation*}
where $g \in Z$ is arbitrary.
That is, the particular problem

\medskip\quad
Find $v \in V$ such that
\begin{equation}\label{eq:specialadjoint}
a(w,v) = \lla w , \, g \rra \quad\text{for all } w \in U
\end{equation}
is considered below.
Regarding the solvability of the problem \eqref{eq:specialadjoint}
we assume that there exists not only a unique solution $v_{g}\in V$, but that
it belongs to some proper subspace $Y\subset V$ (similar to the original (``primal'') problem).
We further assume that the discrete variational formulation

\medskip\quad
Find $v_{gh} \in V_h$ such that
\begin{equation}\label{eq:discrspecialadjoint}
a_h (w_h, v_{gh}) = \lla w_h, g \rra
\quad\text{for all }
w_h \in U_h
\end{equation}
possesses a unique solution.

The following result, on which perhaps the most interesting
part of the analysis of the concrete examples in
Sections 4 and 5 is based, has already been published in the book \cite[Lemma~6.11]{Angermann:21a}.
Nevertheless, we present the proof here in a revised, shortened version,
since it is referred to in the discussion that follows the proof
and especially in the analysis of the two examples.
\begin{theorem}\label{l:weakererror}
Let $u\in W\subset U$ be the solution of the problem \eqref{EQ:VariationalFormulationGeneral},
$u_h \in U_h$ the solution of the discrete problem \eqref{eq2018:6.12},
$v_{g}\in Y\subset V$ the solution of the adjoint problem \eqref{eq:specialadjoint},
and $v_{gh} \in V_h$ the solution of the discrete adjoint problem \eqref{eq:discrspecialadjoint}.
Assume that the bilinear form $a_h$ can be continuously extended onto the product space $(W+U_h) \times (Y+V_h)$
(keeping the notation $a_h$), \ie, there exists a constant $\widetilde{M}_h \ge 0$ such that the estimate
\begin{equation}\label{eq:contofah}
|a_h (w,z)| \le \widetilde{M}_h \| w \|_{U(h)} \| z \|_{V(h)}
\quad\text{for all }
w\in W+U_h,\ z\in Y+V_h
\end{equation}
holds.
Finally assume that the linear form $\ell_h$ can be extended onto $Y+V_h$
(keeping the notation $\ell_h$, too).
Then:
\begin{equation}\label{eq2020:kap61_6174}
\begin{aligned}
\|u - u_h\|_{Z'}
& \le \sup_{g \in Z \setminus\{0\}}
\frac{1}{\|g \|_{Z}} \Big\{ \widetilde{M}_h \| u - u_h\|_{U(h)} \| v_{g} - v_{gh} \|_{V(h)}
- \left[ a_h (u-u_h, v_{g}) - \lla u - u_h, g \rra \right] \\
&\quad - \left[ a_h (u,v_{g} - v_{gh}) - \ell_h(v_{g} - v_{gh}) \right ]
+ \left[ (a_h - a) (u,v_{g}) - (\ell_h - \ell) (v_{g})\right] \Big\}.
\end{aligned}
\end{equation}
\end{theorem}
\begin{proof}
Thanks to the relationship
\[
\|u - u_h\|_{Z'}
= \sup_{g \in Z \setminus\{0\}} \frac{ \lla u - u_h,g\rra}{\|g \|_{Z}}
\]
it is sufficient to estimate the numerator term. It can be decomposed as
\begin{align*}
&\lla u - u_h,g\rra\\
& = a(u,v_{g}) - a_h(u_h, v_{gh}) \\
& = a_h (u,v_{g}) - a_h(u_h,v_{gh}) + (a-a_h)(u,v_{g}) \\
& = a_h (u - u_h, v_{g} - v_{gh}) + a_h (u_h, v_{g} - v_{gh}) + a_h (u -u_h, v_{gh}) + (a -a_h) (u,v_{g})\\
& =: I_1 + I_2 + I_3 + I_4.
\end{align*}
For the first term we have the estimate \eqref{eq:contofah} by assumption:
\[
|I_1| \le \widetilde{M}_h \| u -u_h \|_{U(h)} \| v_{g} - v_{gh}\|_{V(h)}.
\]
The term $I_3$ can be split as follows:
\begin{align*}
I_3 & = -a_h (u,v_{g} - v_{gh}) + a_h (u,v_{g}) - a_h (u_h, v_{gh}) \\
& = - [ a_h (u, v_{g} - v_{gh}) - a (u,v_{g}) + \ell_h(v_{gh}) ] + (a_h - a) (u, v_{g}) \\
& = - [ a_h (u, v_{g} - v_{gh}) - \ell_h(v_{g} - v_{gh}) ] + (a_h - a)(u,v_{g}) - (\ell_h - \ell) (v_{g}).
\end{align*}
Finally it is not difficult to see that
\begin{align*}
I_2 + I_4 & = -a_h (u - u_h, v_{g}) - a_h (u_h, v_{gh}) + a(u,v_{g})\\
& = -a_h (u - u_h, v_{g}) - \lla u_h,g \rra + \lla u,g \rra\\
& = - [ a_h ( u - u_h, v_{g}) - \lla u - u_h,g \rra ].
\end{align*}
Putting all the above relationships together, we obtain the statement.
\end{proof}
The properties of the variational formulations for different situations are
summarized in the subsequent table.
It should be read so that the relationships in the second or third column apply
in addition to the relationships listed in the first column.
\begin{center}
\begin{tabular}{|c|c|c|}\hline
General case & Conforming case & Consistent case\\\hline
$a(u,v)=\ell(v)$ & & \\
& $a(u,v_h)=\ell(v_h)$ &\\
$a_h(u_h,v_h)=\ell_h(v_h)$ & & \\
& & $a_h(u,v_h)=\ell_h(v_h)$\\
$a(w,v_g)=\lla w,g \rra$ & & \\
& $a(w_h,v_g)=\lla w_h,g \rra$ & \\
$a_h(w_h,v_{gh})=\lla w_h,g \rra$ & & \\
& & $a_h(w_h,v_g)=\lla w_h,g \rra$
\\\hline
\end{tabular}
\end{center}
If both the original and the adjoint problem are discretized by
conforming methods (see the second column of the table),
the term $I_2 + I_3 + I_4$
(\ie, the last three terms in \eqref{eq2020:kap61_6174})
can be rewritten as
\begin{equation}\label{eq2020:kap61_6180}
I_2 + I_3 + I_4
= (a-a_h)(u - u_h, v_{g} - v_{gh}) - (a - a_h) (u_h, v_{gh}) + (\ell - \ell_h) (v_{gh}) .
\end{equation}
If the discretizations of both the original and the adjoint problem
are consistent (see the third column of the table), we have that
\begin{equation}\label{eq2020:kap61_61781}
I_2 + I_3 + I_4
= (a - a_{h}) (u,v_{g}).
\end{equation}

\subsubsection*{Discussion}
The success of the presented approach clearly depends on the possibility of obtaining
suitable estimates of the four individual terms in the bound in \eqref{eq2020:kap61_6174}.
In particular, all addends standing in the braces have to be estimated in such a way
that they contain the term $\|g\|_{Z}$ as a factor.

Concerning the first term, we can write, for $g \in Z \setminus\{0\}$,
\begin{align*}
\widetilde{M}_h \| u -u_h \|_{U(h)} \| v_{g} - v_{gh}\|_{V(h)}
&= \widetilde{M}_h \| u -u_h \|_{U(h)} \frac{\| v_{g} - v_{gh}\|_{V(h)}}{\|g\|_{Z}}\|g\|_{Z}\\
&\le \widetilde{M}_h \tilde\eta \| u -u_h \|_{U(h)} \|g\|_{Z}
\end{align*}
with
\[
\tilde\eta
:= \tilde\eta(A^\prime,A_h^\prime,V_h,Z)
:= \sup_{g \in Z \setminus\{0\}}\frac{\| (A^\prime)^{-1}g - (A_h^\prime)^{-1}g\|_{V(h)}}{\|g\|_{Z}},
\]
where
$(A^\prime)^{-1}:\;Z \to Y$ and $(A_h^\prime)^{-1}:\;Z \to V_h$ are the solution operators
of the problems \eqref{eq:specialadjoint} and \eqref{eq:discrspecialadjoint}, respectively.

The quantity $\tilde\eta$ can be usefully further estimated if, for example,
$(Y,\| \cdot \|_{Y})$ is a Banach space (in fact it was sufficient if $\| \cdot \|_{Y}$ were a seminorm)
and the following two conditions are met:
\begin{itemize}
\item
Stable regularity of the solution of the adjoint problem:
The solution $v_{g}$ of \eqref{eq:specialadjoint}
even belongs to the space $Y$ and satisfies the stability estimate
\begin{equation}\label{eq2020:kap61_6176}
\| v_{g} \|_{Y} \le C_{s} \|g\|_{Z}
\quad\text{for all }
g \in Z,
\end{equation}
where $C_{s} \ge 0$ is a constant independent of $g$.
\item
Convergence of the solution of the discrete adjoint problem:
There exist constants $C_{a} \ge 0$, $q > 0$ such that the error of
the discrete solution $v_{gh} \in V_h$ of \eqref{eq:discrspecialadjoint}
satisfies the estimate
\begin{equation}\label{eq2020:kap61_6175}
\| v_{g} - v_{gh} \|_{V(h)} \le C_{a} h^q \| v_{g} \|_{Y}.
\end{equation}
\end{itemize}
Indeed, if both conditions are satisfied, we have that
\[
\| (A^\prime)^{-1}g - (A_h^\prime)^{-1}g\|_{V(h)} = \| v_{g} - v_{gh} \|_{V(h)} \le C_{a} h^q \| v_{g} \|_{Y}
\le C_{s}C_{a} h^q \|g\|_{Z},
\]
hence
\[
\tilde\eta \le C_{s}C_{a} h^q.
\]
So if the order of convergence of the discrete solution $u_h\in U_h$
of the original discrete problem \eqref{eq2018:6.12} is $p>0$, that is
\begin{equation}\label{eq:primalconvergence}
\| u - u_h \|_{U(h)} \le C_{p} h^p \| u \|_{W}
\end{equation}
with some constant $C_{p} \ge 0$, then we get the estimate
\begin{equation}\label{eq:wnbound1}
\widetilde{M}_h \| u -u_h \|_{U(h)} \| v_{g} - v_{gh}\|_{V(h)}
\le \widetilde{M}_h C_{s}C_{a}C_{p} h^{p+q} \| u \|_{W} \|g\|_{Z}.
\end{equation}
The second addend in the bound in \eqref{eq2020:kap61_6174} can be interpreted as a consistency
error of the discrete adjoint problem at the test function $u-u_h$.
If it were possible to obtain a consistency error estimate of the form
\begin{equation}\label{eq2020:kap61_6177}
| a_h (w,v_{g}) - \lla w,g \rra |
\le C_{ca} h^{\alpha} \| w \|_{U(h)} \| v_{g} \|_{Y},
\end{equation}
where $C_{ca} \ge 0$, $\alpha > 0$ are certain constants, this,
together with the regularity condition \eqref{eq2020:kap61_6176}
and the estimate \eqref{eq:primalconvergence}, would lead to the relationship
\begin{equation}\label{eq:wnbound2}
| a_h (u - u_h,v_{g}) - \lla u - u_h,g \rra |
\le C_{ca} C_{s} h^{\alpha} \|u - u_h\|_{U(h)}\|g\|_{Z}
\le C_{ca} C_{p} C_{s} h^{p + \alpha} \| u \|_{W} \|g\|_{Z}.
\end{equation}
The third addend in the bound in \eqref{eq2020:kap61_6174} is a consistency
error of the discrete original problem at the test function $v_{g} - v_{gh}$.
If an estimate of the type
\[
| a_h (u,z) - \ell_h(z) | \le C_{cp} h^{\beta} \| z \|_{V(h)} \| u \|_W
\]
with certain constants $C_{cp} \ge 0$, $\beta > 0$ is assumed, then,
similar to the above discussion of the first addend, it follows that
\begin{equation}\label{eq:wnbound3}
| a_h (u,v_{g} - v_{gh}) - \ell_h(v_{g} - v_{gh}) |
\le C_{cp} h^{\beta} \tilde\eta \| u \|_W \|g\|_{Z}
\le C_{s} C_{a} C_{cp} h^{q+\beta} \| u \|_W \|g\|_{Z}.
\end{equation}
The fourth addend represents approximation errors.
If it were possible to obtain an estimate of the form
\[
| (a_h -a) (u,v_{g}) - (\ell_h - \ell) (v_{g}) |
\le C_{q} h^{\gamma}  \| u \|_{W} \| v_{g} \|_Y
\]
with constants $C_{q} \ge 0$, $\gamma > 0$,
then it would follow, together with the regularity condition \eqref{eq2020:kap61_6176},
that
\begin{equation}\label{eq:wnbound4}
| (a_h -a) (u,v_{g}) - (\ell_h - \ell) (v_{g}) |
\le C_{q} C_{s} h^{\gamma} \| u \|_{W} \|g\|_{Z}.
\end{equation}
Putting the estimates \eqref{eq:wnbound1}--\eqref{eq:wnbound4} together,
the estimate \eqref{eq2020:kap61_6174} reads as
\[
\|u - u_h\|_{Z'}
\le C h^r \| u \|_{W}
\]
with
\[
C:= \widetilde{M}_h C_{s}C_{a}C_{p}
+ C_{ca} C_{p} C_{s}
+ C_{s} C_{a} C_{cp}
+ C_{q} C_{s}
\quad\text{and}\quad
r:=\min \{ p+q, \, p+\alpha, \, q+\beta, \, \gamma \} .
\]
In the frequently encountered case  $U = V \subset H^1(\Om)$ and $U_h = V_h$ consisting
of conforming $\mathcal{P}_{k}$-elements, a natural choice for the space $Z$ is
\[
Z := L^2(\Om).
\]
Provided the data of the boundary value problem are sufficiently smooth,
the relationships \eqref{eq2020:kap61_6176}, \eqref{eq2020:kap61_6175} can be satisfied
by choosing $Y := H^2(\Om)$ and $q=1$.

In other, less standard situations, $Z := H^{k-1}(\Om)$ can also be taken.
This may allow to choose $Y := H^{k+1}(\Om)$ and also $q = k$.
Then, provided that the consistency errors behave appropriately,
the the optimal case of order doubling in the norm $\| \cdot \|_{1-k}$ can be reached.

The importance of such negative norm estimates consists in the possibility
of deriving error estimates for some functionals $\p \in U(h)'$.
Indeed, assume that such a functional $\p$ is represented via a smooth function,
\ie, it even holds that
\[
\p \in Z \quad (\cong Z'').
\]
Then
\[
\p (u - u_h) \le \| \p \|_{Z} \|u - u_h\|_{Z'}\,.
\]
\begin{remark}
\begin{enumerate}[1)]
\item
If the discrete formulation \eqref{eq2018:6.12} is consistent, the first addend
in the bound in \eqref{eq2020:kap61_6174} can be replaced by
$\widetilde{M}_h \| u -u_h \|_{U(h)} \| v_{g} - v_{h}\|_{V(h)}$
with arbitrary $v_{h}\in V_h$
thanks to the Galerkin orthogonality \eqref{eq:Galorthogonality}.
Then we have the estimate
\[
\widetilde{M}_h \| u -u_h \|_{U(h)} \| v_{g} - v_{h}\|_{V(h)}
\le \widetilde{M}_h \eta \| u -u_h \|_{U(h)} \|g\|_{Z}
\]
with
\[
\eta := \eta(A^\prime,V_h,Z)
:= \inf_{v_h \in V_h} \sup_{g \in Z \setminus\{0\}}\frac{\| (A^\prime)^{-1}g - v_h\|_{V(h)}}{\|g\|_{Z}}
\quad (\le \tilde\eta).
\]
The quantity $\eta$ was introduced in \cite{Sauter:06} in connection with
stability and convergence investigations of conforming and consistent Galerkin discretizations
of the Helmholtz equation for large wavenumbers.
\item
In case of conforming discretizations and if $U = V$ are Hilbert spaces,
an alternative estimate can be derived.
Under the assumptions of Theorem~\ref{l:weakererror} we have
(using the notation of the proof):
\begin{align*}
\lla u - u_h,g \rra & = a(u - u_h, v_{g})
= a(u - u_h, v_{g} - \Pi_h v_{g}) + \ell(\Pi_h v_{g}) - a(u_h, \Pi_h v_{g}),
\end{align*}
where $\Pi_h:\;V_h\to V$ is the orthogonal projector.
Then
\begin{align*}
\|u - u_h\|_{Z'}
\le \sup_{g \in Z \setminus \{0\}} \frac{1}{\|g\|_{Z}} \Big\{ \widetilde{M} \|u-u_h\|_{V}
\|v_{g} - \Pi_h v_{g}\|_{V}
- [a (u_h, \Pi_h v_{g}) - \ell(\Pi_h v_{g})]\Big\}.
\end{align*}
Since $V$ is reflexive as a Hilbert space, we can set $Z: = V$.
Introducing $\ell_h := a_h(u_h, \cdot)$,
the second term can be treated as follows:
\begin{align*}
a (u_h, \Pi_h v_{g}) - \ell(\Pi_h v_{g})
&= (a-a_h) (u_h, \Pi_h v_{g}) + \ell_h(\Pi_h v_{g}) - \ell(\Pi_h v_{g}).
\end{align*}
Thanks to the the symmetry of $\Pi_h$ it holds that
\[
\ell_h(\Pi_h v_{g}) - \ell(\Pi_h v_{g})
=\lla \ell_h - \ell, \Pi_h v_{g} \rra
=\lla \ell_h - \Pi_h \ell, v_{g} \rra.
\]
Hence we arrive at the estimate
\[
|a (u_h, \Pi_h v_{g}) - \ell(\Pi_h v_{g})|
\le (a-a_h) (u_h, \Pi_h v_{g})
+ \|v_{g}\|_{V} \|\ell_h - \Pi_h \ell\|_{V}.
\]
\end{enumerate}
\end{remark}

\section{The model problem}
\label{sec:modelproblem}
In this and the subsequent section we will apply the theoretical results to
finite element discretizations of the following diffusion-convection-reaction problem.
Given a bounded polyhedral Lipschitz domain $\Om\subset\R^d$, $d\in\{2,3\}$,
we consider the differential equation in divergence form
\begin{equation}\label{eq:-1.33lin}
- \nabla\cdot(\vecK\nabla u - \vecc\,u) + r\,u = f
\quad\text{in }\Om
\end{equation}
with the data
\begin{gather*}
\vecK=\vecK^\top\in L^\infty(\Om)^{d,d},
\quad
\vecc\in L^\infty(\Om)^d,
\quad
\partial_j c_j \in L^{3/2}(\Om),\ j\in\{1,\ldots,d\},
\\
r\in L^\infty(\Om),
\quad
f\in L^2(\Om),
\end{gather*}
where, for some constant $k_0>0$,
\begin{equation}\label{eq:Kspd}
\vecxi\cdot(\vecK(x)\vecxi) \ge k_0 |\vecxi|^2
\quad\text{for all }\vecxi\in\R^d
\text{ and almost all } x\in\Om.
\end{equation}
To formulate the boundary conditions we assume that the boundary $\pOm$
is decomposed into disjoint subsets
$\wtGamma_j,$ $j\in\{1,2,3\}$:
$
\pOm = \wtGamma_1 \cup \wtGamma_2 \cup \wtGamma_3 \, ,
$
where $\wtGamma_3$ is supposed to be a relatively closed subset of $\pOm$.
Given the functions
$\tg_j\in L^2(\wtGamma_j)$, $j\in\{1,2\}$, and
$\talpha\in L^\infty(\wtGamma_2)$,
the boundary conditions are
(the symbol $\nu$ denotes the outer unit normal on $\pOm$):
\begin{equation}\label{eq2020:kap_32_334_ofdivergence}
\begin{alignedat}{2}
( \vecK \nabla u - \vecc\,u ) \cdot \vecnu
&= \tg_1 &\quad&\text{on } \wtGamma_1, \\
( \vecK \nabla u - \vecc\,u ) \cdot \vecnu + \talpha u
&= \tg_2 & &\text{on } \wtGamma_2,\\
u &= 0
& &\text{on } \wtGamma_3,
\end{alignedat}
\end{equation}
\ie, we restrict our investigations to homogeneous Dirichlet boundary conditions.

The variational formulation \eqref{EQ:VariationalFormulationGeneral} is specified
as follows:
\[
\begin{aligned}
U := V &:= \big\{ v \in H^1(\Om)\;|\;
v = 0 \text{ on } \wtGamma_3 \text{ in the sense of traces}\big\},\\
a(u,v) &:= (\vecK\nabla u - \vecc u,\nabla v) + (ru,v) + (\talpha u,v)_{\wtGamma_2}
\quad\text{for all } u,v \in V,\\
\ell(v) &:= (f,v) + (\tg_1,v)_{\wtGamma_1}
+ (\tg_2,v)_{\wtGamma_2}
\quad\text{for all } v \in V.
\end{aligned}
\]
Since we will also have to deal with the adjoint boundary value problem,
we formulate the additional requirements to the data in a form that slightly differs
from the usual one:
\begin{equation}\label{eq:coercivityconddviform}
\begin{aligned}
\text{1) } &
r + \frac12 \nabla\cdot\vecc \ge 0 \quad\text{in }\Om\,,\\
\text{2) } &
\vecnu\cdot\vecc\ge 0 \quad\text{on } \wtGamma_{2,1}
:= \{ x \in \wtGamma_2 \,|\, \talpha(x) = \vecnu\cdot\vecc\}\,,
\hfill\rule{23ex}{0ex}\\
\text{3) } &
\talpha-\frac12\vecnu\cdot\vecc\ge 0
\quad\text{on } \wtGamma_2\setminus\wtGamma_{2,1}\,,
\quad
\vecnu\cdot\vecc\le 0\quad\text{on } \wtGamma_1\,.
\end{aligned}
\end{equation}
These assumptions together with \eqref{eq:Kspd} ensure that the bilinear form $a$ is
at least positively semidefinite on $V$, as the following identity shows:
\begin{align*}
a(v,v)&=(\vecK\nabla v,\nabla v) + (r+ \frac12 \nabla\cdot\vecc,v^2)\\
&-(\vecnu\cdot\vecc,v^2)_{\wtGamma_1} + \frac12(\vecnu\cdot\vecc,v^2)_{\wtGamma_{2,1}}
+(\talpha-\frac12\vecnu\cdot\vecc,v^2)_{\wtGamma_2\setminus\wtGamma_{2,1}}
\quad\text{for all } v \in V.
\end{align*}
\begin{remark}
The above choice of boundary conditions \eqref{eq2020:kap_32_334_ofdivergence}
together with the requirements \eqref{eq:coercivityconddviform},
especially the sign condition to $\vecnu\cdot\vecc$ on $\wtGamma_1$ in \eqref{eq:coercivityconddviform},3),
does not include the possibility to prescribe boundary data for
$(\vecK \nabla u - \vecc\,u ) \cdot \vecnu$ at an outflow boundary.

At first glance this seems physically questionable, but that there are arguments
underlying this fact in two extreme situations, namely the diffusion-dominated
and the convection-dominated regimes.
First we note that the boundary term on $\wtGamma_1$ in the above identity can be controlled
thanks to the boundedness of the trace operator $V\to L^2(\wtGamma_1)$
\cite[Thm.~1.6.6]{Brenner:08}.
That is, if $k_0$ in \eqref{eq:Kspd} is sufficiently large 
in comparison with the $L^\infty$-norm of $\vecnu\cdot\vecc$ on $\wtGamma_1$
(``diffusion-dominated regime near $\wtGamma_1$''),
the positive definiteness of the bilinear form $a$ on $V$ can be preserved
even if $\vecnu\cdot\vecc> 0$ on $\wtGamma_1$.
In the contrary case, if the $L^\infty$-norm of $\vecK$ on $\Om$ is
very small in comparison with the $L^\infty$-norm of $\vecnu\cdot\vecc$ on $\wtGamma_1$
(``convection-dominated regime near $\wtGamma_1$''),
the problem is almost elliptically degenerate, and in such a case it
is not appropriate to prescribe boundary data at an outflow (``noncharacteristic'') boundary.
In practice, a so-called do-nothing boundary condition, which is more or less artificial,
is often prescribed at an outflow boundary in order to avoid boundary layer effects.
Therefore, in the convection-dominated case, outflow boundary conditions
can and have to be be modeled via $\wtGamma_2$.
\end{remark}
In the next step, to formulate the needed regularity conditions to
the problem \eqref{eq:-1.33lin}--\eqref{eq2020:kap_32_334_ofdivergence},
and to describe the discretization, we introduce a family $(\Th)_h$ of consistent partitions
of the domain $\Om$.
Given an \emph{admissible partition} $\Tnoh:=\Th$ of $\Om$
(in the sense of \cite[(FEM 1)]{Ciarlet:02b}, where we omit the subscript $h$
for simplicity of notation),
it is called \emph{consistent}, if the following additional properties are met:
\begin{itemize}
\item
Every face $F$ of an element $K \in\Tnoh$ is either a subset of the boundary $\pOm$ of $\Om$
or identical to a face of another element $K'\in\Tnoh$.
\item
Each of the boundary subsets $\wtGamma_j$
is interrelated with a set of faces $\mathcal{F}_j$ in the following way:
\[
\text{cl}_\text{\,rel}\big(\wtGamma_j\big)
= \bigcup_{F \in \mathcal{F}_j} F, \quad j\in\{1,2,3\},
\]
where $\text{cl}_\text{\,rel}$ denotes the closure of a boundary subset
in the relative topology of $\pOm$.
\end{itemize}
To simplify the notation in the further analysis, we denote the set of all faces
of all elements of $\Tnoh$ by $\overline{\mathcal{F}}$,
the set of those faces that are lying on the boundary $\pOm$
by $\partial\mathcal{F}$, and the set of all faces of an element $K\in\Tnoh$ by $\mathcal{F}_K$.
Hence $\mathcal{F}:=\overline{\mathcal{F}}\setminus\partial\mathcal{F}$ is the set
of all interior faces.

Furthermore we introduce \emph{jumps} and \emph{averages} of piecewise defined functions
as follows.
Let $F\in\mathcal{F}_K\cap\mathcal{F}_{K'}\ne\emptyset$
be an interior face in the partition $\Tnoh$,
$K$, $K' \in\Tnoh$, $K\ne K'$.
By $\vecnu_K$ we denote the outer unit normal on the boundary $\partial K$
of an element $K\in\Tnoh$.
In case of scalar functions $v:\;\overline{\Om}\to\R$
such that $v|_{K} \in H^1(K)$, $v|_{K'} \in H^1(K')$,
we define
\begin{equation}\label{eq:scaljumpavg}
\begin{alignedat}{2}
\jump{v}
&:= \jump{v}_{F} &\,:=\,& v|_{K} \, \vecnu_K + v|_{K'} \, \vecnu_{K'}\,, \\
\avg{v}
&:= \avg{v}_{F} &\,:=\,& \frac12( v|_{K} + v|_{K'} )\,,
\end{alignedat}
\end{equation}
where here and in the subsequent relationships \eqref{eq:intjumpavg}--\eqref{eq:magicf}
the terms on right-hand sides are to be understood in the sense of traces on the face $\mathcal{F}$.
In case of vector fields $\vecp:\;\overline{\Om}\to\R^d$ with
$\vecp|_{K}\in H(\div;K)$ and $\vecp|_{K'}\in H(\div;K')$,
we set
\begin{equation}\label{eq:intjumpavg}
\begin{alignedat}{2}
\jump{\vecp} &:= \jump{\vecp}_{F}
&\,:=\,& \vecp|_{K} \cdot \vecnu_K + \vecp|_{K'} \cdot \vecnu_{K'}\,, \\
\avg{\vecp} &:= \avg{\vecp}_{F}
&\,:=\,& \frac12( \vecp|_{K} + \vecp|_{K'} ) \,.
\end{alignedat}
\end{equation}
If $F$ is a boundary face, \ie, $F\in\partial\mathcal{F}\cap\mathcal{F}_K$ for some $K \in\Tnoh$,
we define
\begin{equation}\label{eq:bdjumpavg}
\jump{v}_{F} := v|_{K} \, \vecnu_{K},
\quad
\avg{v}_{F} := v|_{K},
\quad
\jump{\vecp}_{F} := \vecp|_{K} \cdot \vecnu_K,
\quad
\avg{\vecp}_{F} := \vecp|_{K}.
\end{equation}
The definitions \eqref{eq:scaljumpavg}--\eqref{eq:bdjumpavg} are designed in such a way that
the averages retain the function type, while jumps of scalar functions are vector fields and vice versa.

A very useful relation between jumps and averages on interior faces is the so-called
\emph{magic formula}:
\begin{equation}\label{eq:magicf}
\avg{v}\jump{\vecp}+\jump{v}\avg{\vecp}
= v|_{K}\,\vecp|_{K} \cdot \vecnu_K + v|_{K'}\,\vecp|_{K'} \cdot \vecnu_{K'}.
\end{equation}
Finally, for $k \in \N$, $q\ge 1$, we define the \emph{broken Sobolev space} $W^{k,q}(\Tnoh)$
on a partition $\Tnoh$ of the domain $\Om$ by
\[
W^{k,q}(\Tnoh) := \{ v \in L^2(\Om) \;|\; v|_{K} \in W^{k,q}(K)
\quad\text{for all }K \in\Tnoh \},
\]
equipped with the norm
\[
\|v\|_{k,q,\Tnoh} := \Big( \sum_{K \in\Tnoh} \|v\|_{k,q,K}^q \Big)^{1/q}
\quad\text{for } q\in [1,\infty)
\]
or
\[
\|v\|_{k,\infty,\Tnoh} := \max_{K \in\Tnoh} \|v\|_{k,\infty,K}
\]
respectively.
As usual, by
\[
|v|_{k,q,\Tnoh} := \Big( \sum_{K \in\Tnoh} |v|_{k,q,K}^q \Big)^{1/q}
\quad\text{for } q\in [1,\infty)
\]
or
\[
|v|_{k,\infty,\Tnoh} := \max_{K \in\Tnoh} |v|_{k,\infty,K},
\]
resp., the corresponding seminorms are denoted.
In the case $q=2$ we use the standard notations $H^k(\Tnoh):=W^{k,2}(\Tnoh)$,
$\|v\|_{k,\Tnoh}:=\|v\|_{k,2,\Tnoh}$, $|v|_{k,\Tnoh}:=|v|_{k,2,\Tnoh}$.

\subsubsection*{Regularity assumptions}
Basically, we assume that the problem has a unique weak solution $u \in H^1(\Om)$.
This can be guaranteed by making additional assumptions to \eqref{eq:coercivityconddviform},
see, \eg, \cite[Thm.~3.16]{Angermann:21a}.
To analyze the consistency error, however, we need additional regularity assumptions
(which are also additional requirements to the data of
\eqref{eq:-1.33lin}--\eqref{eq2020:kap_32_334_ofdivergence}):
\begin{equation}\label{eq:7.17a}
\vecK\nabla u \in H(\div;\Om),\ cu\in H^1(\Om).
\end{equation}
Note that, as consequence of \eqref{eq:7.17a},
\begin{equation}\label{eq:7.17c}
\begin{aligned}
\jump{\vecK\nabla u - \vecc u}_F= 0
\quad\text{for all } F\in\mathcal{F},
\end{aligned}
\end{equation}
\ie, the normal components of the flux densities are continuous across the inner element boundaries.

\section{Example I: The Crouzeix-Raviart discretization}
\label{sec:NonconformingFE}
In this section we consider \emph{shape-regular} (\ie, \emph{regular}
in the sense of \cite[Sect.~3.1.1]{Ciarlet:02b}) families of consistent simplicial partitions of $\Om$.
To specify of the approximation spaces we introduce the space
\begin{equation}\label{eq:defCR1}
CR_1(\Om):= \{ v \in L^{1}(\Om) \;|\; v|_{K} \in \mathcal{P}_1(K)
\text{ and }
\int_F \jump{v} \, d\sigma = \bzero
\text{ for all } F \in \mathcal{F} \},
\end{equation}
where $\mathcal{P}_1(K)$ denotes the set of polynomials of degree one on $K$.
This space is is also known as the global \emph{Crouzeix-Raviart space} of degree one.

It should be noted that the an element $v \in V_h$ is bi-valued on $\mathcal{F}$ in general.
With that we define
\begin{equation}\label{eq:defV0h}
U_h := V_h := \Big\{ v\in CR_1(\Om) \;|\; \int_F v \, d\sigma = 0
\quad\text{for all } F \in \mathcal{F}_3 \Big\}.
\end{equation}
Now, to formulate a (nonconsistent) discretization of
\eqref{eq:-1.33lin}--\eqref{eq2020:kap_32_334_ofdivergence},
we introduce the forms
\begin{equation}\label{eq2020:kap81_823}
\begin{split}
a_h(w,v) &:= \sum_{K \in\Tnoh} (\vecK \nabla w - \vecc w, \nabla v)_K + (rw,v)
+ \sum_{F \in \mathcal{F} \cup \mathcal{F}_3} (\cupw{w},\jump{v})_F
+ (\talpha w, v)_{\wtGamma_2},\\
\ell_h(v) &:= \ell(v)
\qquad\text{for all }w,v \in U(h):=V(h):=H^1(\Tnoh),
\end{split}
\end{equation}
where the \emph{upwind evaluation} $\cupw{w}$ of the term $\vecc w$
at the interior faces $F\in\mathcal{F}_K\cap\mathcal{F}_{K'}\ne\emptyset$
is defined pointwise as
\begin{equation}\label{eq2020:kap81_824}
\begin{aligned}
\cupw{w} := \begin{cases}
\vecc w|_{K} & \text{for } \vecc \cdot \vecnu_{K} > 0, \\
\vecc w|_{K'} & \text{for } \vecc \cdot \vecnu_{K} \le 0.
\end{cases}
\end{aligned}
\end{equation}
At the boundary faces $F\in\partial\mathcal{F}$ of a simplex $K$ we set
\begin{equation}\label{eq2020:kap81_825}
\begin{aligned}
\cupw{w} := \begin{cases}
\vecc w|_{K} & \text{for } \vecc \cdot \vecnu_{K} > 0, \\
0 & \text{for } \vecc \cdot \vecnu_{K} \le 0.
\end{cases}
\end{aligned}
\end{equation}
Then the discrete method reads as follows:
\begin{equation}\label{eq2020:kap81_817}
\text{Find } u_h \in V_h \text{ such that } \quad
a_h (u_h,v_h) = \ell_h(v_h)
\quad\text{for all } v_h \in V_h \,.
\end{equation}
At first we study the coercivity of the bilinear form $a_h$.
So let $v\in H^1(\Tnoh)$. Starting from
\begin{align*}
a_h(v,v)
&= \sum_{K \in\Tnoh} (\vecK \nabla v, \nabla v)_K
+ (rv,v)
+ (\talpha v, v)_{\wtGamma_2}\\
&\qquad - \sum_{K \in\Tnoh} (\vecc v, \nabla v)_K
+ \sum_{F \in \mathcal{F} \cup \mathcal{F}_3} (\cupw{v},\jump{v})_F,
\end{align*}
we treat the fourth term as follows :
\begin{align*}
- \sum_{K \in\Tnoh} (\vecc v, \nabla v)_K
&= -\frac12 \sum_{K \in\Tnoh} (\vecc, \nabla v^2)_K\\
&= \frac12 \sum_{K \in\Tnoh} ((\nabla\cdot\vecc)v,v)_K
- \frac12 \sum_{F \in \partial\mathcal{F}} (\vecc v,\vecnu v)_F
- \frac12 \sum_{F \in \mathcal{F}}  (\vecc, \jump{v^2})_F.
\end{align*}
This gives
\begin{align*}
a_h(v,v)
&= \sum_{K \in\Tnoh} (\vecK \nabla v, \nabla v)_K
+ \Big(\Big(r+\frac12\nabla\cdot\vecc\Big) v,v\Big)
+ (\talpha v, v)_{\wtGamma_2}
- \frac12 \sum_{F \in \partial\mathcal{F}} (\vecc v,\vecnu v)_F\\
&\quad + \sum_{F \in \mathcal{F}} \Big[(\cupw{v},\jump{v})_F-\frac12 (\vecc, \jump{v^2})_F\Big]
+ \sum_{F \in \mathcal{F}_3} (\cupw{v},\jump{v})_F\\
&\ge \sum_{K \in\Tnoh} k_0\|\nabla v\|_K^2
- \frac12 \sum_{F \in \mathcal{F}_1} (\vecc v,\vecnu v)_F
+ \sum_{F \in \mathcal{F}_2} \Big(\Big(\talpha- \frac12 \vecnu\cdot\vecc\Big)  v, v\Big)_F\\
&\quad + \sum_{F \in \mathcal{F} \cup \mathcal{F}_3}
\Big[(\cupw{v},\jump{v})_F-\frac12 (\vecc, \jump{v^2})_F\Big]\\
&\ge \sum_{K \in\Tnoh} k_0\|\nabla v\|_K^2
+ \frac12 \sum_{F \in \mathcal{F} \cup \mathcal{F}_3} (|\vecc \cdot \vecnu |,\jump{v}^2)_F
\ge k_0|v|_{1,\Tnoh}^2,
\end{align*}
where we have used the properties \eqref{eq:coercivityconddviform},
\eqref{eq2020:kap81_824}, and \eqref{eq2020:kap81_825}.

If we now include additional conditions
(which are similar to the conditions mentioned at the beginning of the subsection about
the regularity assumptions, but a little more stringent),
we obtain the $(V+V_h)$-coercivity of $a_h$ uniform in $h$.
Namely, we assume that, in addition to the conditions \eqref{eq:coercivityconddviform},
one of the following conditions is satisfied:
\begin{align}\label{eq:addcoerccond1}
\text{a)}\quad & |\wtGamma_3|_{d-1} >0.\\
\text{b)}\quad & \text{There exists some constant } r_0>0 \text{ such that }
r + \frac12 \nabla\cdot\vecc \ge r_0 \text{ on } \Om.
\label{eq:addcoerccond2}
\end{align}
Indeed, the case b) immediately implies the estimate
$a_h(v,v) \ge k_0 |v|_{1,\Tnoh}^2 + r_0 \| v\|_{0,\Om}^2\ge \min\{k_0;r_0\}\|v\|_{1,\Tnoh}^2$
for all $v\in H^1(\Tnoh)$.

To verify the case a) for $v\in V$ we make use of
the Poincar\'{e}-Friedrichs inequality \cite[Exercise 5.x.13]{Brenner:08},
\ie, there exists a constant $C_\mathrm{PF} > 0$ such that
\[
\|v\|_0 \le C_\mathrm{PF} |v|_1 = C_\mathrm{PF} |v|_{1,\Tnoh}
\quad \text{for all } v \in V.
\]
If $v_h\in V_h$, we make use of a discrete version of this inequality
which can be proven analogously to the proof of \cite[Thm.~(10.6.12)]{Brenner:08}, \ie,
\[
\| v_h \|_0 \le \tilde{C}_\mathrm{PF} |v_h|_{1,\Tnoh}
\quad \text{for all } v_h \in V_h
\]
with some constant $\tilde{C}_\mathrm{PF} > 0$ independent of $h$.
Since $\|v+v_h\|_{V+V(h)}:=\inf\{\|v\|_{0,\Om}+\|v_h\|_{0,\Om}\}$ is a norm
of $v+v_h$ considered as an element of the subspace $V+V_h\subset L^2(\Om)$,
the combination of the above estimates shows that $|v|_{1,\Tnoh}$ itself is already
a norm on $V+V_h$.
Hence $a_h(v,v) \ge k_0 |v|_{1,\Tnoh}^2$ is the desired coercivity estimate,
which in turn implies that the inf-sup condition \eqref{EQ:NB2h} holds with $\alpha_h=k_0$.

Next we investigate the consistency error. For $u\in V\cap H^2(\Tnoh)$,
satisfying the regularity condition \eqref{eq:7.17a}
(this defines the regularity space $W$), and $v_h\in V_h$, we have:
\begin{equation}\label{eq:CRconserr}
\begin{aligned}
&a_h(u,v_h) - \ell_h(v_h)\\
&= \sum_{K \in\Tnoh} (\vecK \nabla u - \vecc u, \nabla v_h)_K
+ \sum_{F \in \mathcal{F} \cup \mathcal{F}_3} (\cupw{u},\jump{v_h})_F + (ru-f,v_h)\\
&\quad  - (\tg_1,v_h)_{\wtGamma_1}
+ (\talpha u - \tg_2, v_h)_{\wtGamma_2}.
\end{aligned}
\end{equation}
By integration by parts, the diffusion term can be rewritten as
\begin{align*}
\sum_{K \in\Tnoh} (\vecK\nabla u,\nabla v_h)_K
&= - \sum_{K \in\Tnoh} (\nabla\cdot(\vecK\nabla u),v_h)_K\\
&+ \sum_{F \in \mathcal{F}}
\big[(\avg{\vecK\nabla u},\jump{v_h})_F + (\jump{\vecK\nabla u},\avg{v_h})_F\big]
+\sum_{F \in \partial\mathcal{F}} (\vecnu\cdot\vecK\nabla u,v_h)_F,
\end{align*}
where we have used \eqref{eq:magicf}.
A rearrangement of the last three terms, taking into consideration
the definition \eqref{eq:bdjumpavg} of the boundary jumps and averages, yields
\begin{equation}\label{eq2020:kap81_821c}
\begin{aligned}
\sum_{K \in\Tnoh} (\vecK\nabla u,\nabla v_h)_K
&= - \sum_{K \in\Tnoh} (\nabla\cdot(\vecK\nabla u),v_h)_K\\
&\quad + \sum_{F \in \mathcal{F} \cup \mathcal{F}_3}(\avg{\vecK\nabla u},\jump{v_h})_F
+ \sum_{F \in \mathcal{F} \cup \mathcal{F}_1 \cup \mathcal{F}_2}(\jump{\vecK\nabla u},\avg{v_h})_F.
\end{aligned}
\end{equation}
The next terms to consider are the convection terms. Using integration by parts
in the terms over $K \in\Tnoh$ together with \eqref{eq:magicf}, we get
\begin{equation}\label{eq2020:kap81_828}
\begin{aligned}
&-\sum_{K \in\Tnoh} (\vecc u, \nabla v_h)_K
+ \sum_{F \in \mathcal{F} \cup \mathcal{F}_3} (\cupw{u},\jump{v_h})_F\\
&=(\nabla\cdot(\vecc u), v_h)
+ \sum_{F \in \mathcal{F} \cup \mathcal{F}_3} (\cupw{u},\jump{v_h})_F\\
&\quad -\sum_{F \in \mathcal{F}}
\big[(\avg{\vecc u},\jump{v_h})_F + (\jump{\vecc u},\avg{v_h})_F\big]
- \sum_{F \in \partial{F}} (\vecc u,\vecnu v_h)_F\\
&=(\nabla\cdot(\vecc u), v_h)
+ \sum_{F \in \mathcal{F} \cup \mathcal{F}_3} (\cupw{u}-\avg{\vecc u},\jump{v_h})_F\\
&\quad -\sum_{F \in \mathcal{F}} (\jump{\vecc u},\avg{v_h})_F
- \sum_{F \in \mathcal{F}_1 \cup \mathcal{F}_2} (\vecc u,\vecnu v_h)_F.
\end{aligned}
\end{equation}
Since $\jump{u}=\bzero$ on $\mathcal{F}$ thanks to $u \in H^1(\Om)$,
see \cite[Lemma 1.23]{DiPietro:12},
we have
\[
\sum_{F \in \mathcal{F} \cup \mathcal{F}_3} (\cupw{u}-\avg{\vecc u},\jump{v_h})_F
=\sum_{F \in \mathcal{F}_3} (\cupw{u}-\vecc u,\vecnu v_h)_F\,.
\]
The latter term vanishes because of $u=0$ on $\wtGamma_3$.
Inserting \eqref{eq2020:kap81_821c}, \eqref{eq2020:kap81_828} into \eqref{eq:CRconserr},
we arrive at
\begin{align*}
&a_h(u,v_h) - \ell_h(v_h)\\
&= - \sum_{K \in\Tnoh} (\nabla\cdot(\vecK\nabla u),v_h)_K
+ \sum_{F \in \mathcal{F}}(\jump{\vecK\nabla u - \vecc u},\avg{v_h})_F\\
&\quad
+ \sum_{F \in \mathcal{F} \cup \mathcal{F}_3}(\avg{\vecK\nabla u},\jump{v_h})_F
+ \sum_{F \in \mathcal{F}_1 \cup \mathcal{F}_2}(\jump{\vecK\nabla u},\avg{v_h})_F\\
&\quad
+ (\nabla\cdot(\vecc u), v_h)
- \sum_{F \in \mathcal{F}_1 \cup \mathcal{F}_2} (\vecc u,\vecnu v_h)_F\\
&\quad + (ru-f,v_h) - (\tg_1,v_h)_{\wtGamma_1}
+ (\talpha u - \tg_2, v_h)_{\wtGamma_2}\\
&=( -\nabla\cdot(\vecK\nabla u - \vecc u) + ru -f, v_h)
+ \sum_{F \in \mathcal{F} \cup \mathcal{F}_3}(\avg{\vecK\nabla u},\jump{v_h})_F\\
&\quad + \sum_{F \in \mathcal{F}_1 \cup \mathcal{F}_2} (\vecK\nabla u-\vecc u,\vecnu v_h)_F
- (\tg_1,v_h)_{\wtGamma_1}
+ (\talpha u - \tg_2, v_h)_{\wtGamma_2},
\end{align*}
where we have used \eqref{eq:7.17c}.
The first term and the sum of the last three terms vanish since the differential equation
\eqref{eq:-1.33lin} and the boundary conditions \eqref{eq2020:kap_32_334_ofdivergence}
on $\wtGamma_1 \cup\wtGamma_2$ are satisfied as equations in $L^2(\Om)$
and $L^2(\wtGamma_1 \cup\wtGamma_2)$, respectively.
Thus we get
\begin{align*}
a_h (u, v_h) - \ell_h(v_h)
&= \sum_{F \in \mathcal{F} \cup \mathcal{F}_3}(\avg{\vecK\nabla u},\jump{v_h})_F\\
&= \sum_{F \in \mathcal{F}}\Big(\avg{\vecK\nabla u}
-\frac12\big[\Pi_{K}(\vecK\nabla u)+\Pi_{K'}(\vecK\nabla u)\big],\jump{v_h}\Big)_F\\
&\quad + \sum_{F \in \mathcal{F}_3}\Big(\vecK\nabla u-\Pi_{K}(\vecK\nabla u),v_h\Big)_F,
\end{align*}
where
\[
\Pi_{K}(\vecK\nabla u) := \frac{1}{|K|}(\vecK\nabla u,1)_K
\quad\text{for all }
K\supset F.
\]
Thanks to the weak continuity condition in the definition \eqref{eq:defCR1} of $CR_1(\Om)$
and the weak homogeneous Dirichlet boundary condition (cf.~\eqref{eq:defV0h}),
the newly added terms do not change anything.
Therefore, by the Cauchy--Schwarz--Bunyakovsky inequality,
\begin{equation}\label{eq2020:kap81_822}
|a_h (u, v_h) - \ell_h(v_h)|
\le \Big( \sum_{F \in \mathcal{F} \cup \mathcal{F}_3}
h_{F} \big\| \vecK\nabla u - \Pi_{K}( \vecK\nabla u ) \big\|_{0,F}^2 \Big)^{1/2}
\Big( \sum_{F \in \mathcal{F} \cup \mathcal{F}_3}
h_{F}^{-1} \big\| \jump{v_h} \big\|_{0,F}^2 \Big)^{1/2}.
\end{equation}
The multiplicative trace inequality \cite[Lemma 2.19]{Dolejsi:15}
\[
\|v\|_{0, \partial K}^2 \le C \big[\|v\|_{0,K}|v|_{1,K} + h_K^{-1} \|v\|_{0,K}^2\big]
\quad\text{for all }v \in H^{1} (K),\ K \in\Tnoh,
\]
a standard error estimate for $L^2$-projections (see, \eg, \cite[Lemma 2.24]{Dolejsi:15}),
and the relationship
\begin{equation}\label{eq:constantsindependent-1}
C_{\mathcal{F}}^{-1} h_K \le h_{F} \le C_{\mathcal{F}} h_K
\quad\text{for all } F \in\mathcal{F}_K,\ K \in\Tnoh
\end{equation}
with a constant $C_{\mathcal{F}} > 0$ independent of $h_K$
allow to obtain the upper bound
\[
C h |\vecK\nabla u|_{1,\Tnoh}
\]
for the first factor in \eqref{eq2020:kap81_822}.

The second factor in \eqref{eq2020:kap81_822} can be treated as follows.
Denoting by $a_{S,F}$ the barycentre of the face $F$, we observe that
\[
\big\| \jump{v_h} \big\|_{0,F}
= \big\| \jump{v_h} - v_h(a_{S,F}) \big\|_{0,F}.
\]
Since both $\big(v_h - v_h(a_{S,F})\big)\big|_K$ and $\big(v_h - v_h(a_{S,F})\big)\big|_{K'}$
vanish at the same point in $F\in\mathcal{F}_K\cap\mathcal{F}_{K'}\ne\emptyset$,
the scaled trace inequality (see, \eg, \cite[Lemma 7.5]{Angermann:21a})
\[
\|v\|_{0,F} \le C h_K^{1/2} |v|_{1,K}
\quad\text{for all } v \in H^1(K)
\]
is applicable, leading together with \eqref{eq:constantsindependent-1}
to the following upper bound (up to a multiplicative constant) of the second factor:
\[
\Big( \sum_{F=K\cap K'\in\mathcal{F}}
\big[ h_K^{-1} h_K |v_h|_{1,K}^2 + h_{K'}^{-1} h_{K'} |v_h|_{1,K}^2\big]
+ \sum_{F=K\cap\wtGamma_3\in\mathcal{F}_3} h_K^{-1} h_K |v_h|_{1,K}^2 \Big)^{1/2}
\le C |v_h|_{1,\Tnoh}.
\]
Putting the obtained estimates together, we arrive at the following estimate
of the consistency error:
\begin{equation}\label{eq:CRconsistency}
|a_h (u, v_h) - \ell_h(v_h)|
\le C h |\vecK\nabla u|_{1,\Tnoh} |v_h|_{1,\Tnoh}.
\end{equation}
In order to be able to apply Theorem~\ref{th:Strang2},
the approximation order of $V_h$ remains to be determined.
Since the space $\widetilde{V}_h$ of conforming $\mathcal{P}_1$-elements is a subspace of $V_h$,
it follows, for a sufficiently smooth weak solution $u\in W\ (\subset H^2(\Tnoh))$
of \eqref{eq:-1.33lin}--\eqref{eq2020:kap_32_334_ofdivergence} that
\[
\inf_{v_h \in V_h} \| u - v_h \|_{1, \Tnoh}
\le \inf_{v_h \in \widetilde{V}_h} \| u - v_h \|_{1, \Tnoh}
\le Ch |u|_{2,\Tnoh}\,.
\]
In summary, we have proved the following result.
\begin{theorem}\label{th2020:kap81_shaperegular}
Let the family of triangulations be shape-regular,
the weak solution $u \in V\cap H^2(\Tnoh)$ be such that \eqref{eq:7.17a} is satisfied,
and the diffusion coefficient $\vecK$ be so smooth that $\vecK\nabla u \in H^1(\Tnoh)^d$.
Then, under the conditions \eqref{eq:coercivityconddviform}, \eqref{eq:addcoerccond1}
or \eqref{eq:coercivityconddviform}, \eqref{eq:addcoerccond2},
the following error estimate holds for the first-order Crouzeix-Raviart solution
$u_h\in V_h\subset CR_1(\Om)$ of the discrete problem \eqref{eq:defCR1}--\eqref{eq2020:kap81_817}:
\[
\|u - u_h\|_{1,\Tnoh} \le C h \big[|u|_{2,\Tnoh}+ |\vecK\nabla u|_{1,\Tnoh}\big],
\]
where the constant $C>0$ does not depend on $h$.
\end{theorem}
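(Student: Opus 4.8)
The plan is to deduce the estimate from part~1) of Remark~\ref{folg:DG_analysis_conv} (hence from Theorem~\ref{th:Strang2}), by collecting the three ingredients already produced in this section: the $h$-uniform coercivity of $a_h$, the consistency-error bound \eqref{eq:CRconsistency}, and the approximation estimate $\inf_{v_h\in V_h}\|u-v_h\|_{1,\Th}\le Ch|u|_{2,\Th}$. Concretely, set $U_h:=V_h$ from \eqref{eq:defV0h}, $U(h):=V(h):=H^1(\Th)$ equipped with the broken $H^1$-norm $\|\cdot\|_{1,\Th}$, and let $W$ be the space of $u\in V\cap H^2(\Th)$ satisfying \eqref{eq:7.17a}. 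In the case \eqref{eq:coercivityconddviform}, \eqref{eq:addcoerccond2} take $\|\cdot\|_{U_h}=\|\cdot\|_{V_h}:=\|\cdot\|_{1,\Th}$, for which the coercivity estimate reads $a_h(v_h,v_h)\ge\min\{k_0,r_0\}\|v_h\|_{1,\Th}^2$ and one may take $\widetilde{C}_h=1$; in the case \eqref{eq:coercivityconddviform}, \eqref{eq:addcoerccond1} take $\|\cdot\|_{U_h}=\|\cdot\|_{V_h}:=|\cdot|_{1,\Th}$, which is a norm on $V+V_h$ as shown above, the coercivity estimate reads $a_h(v_h,v_h)\ge k_0|v_h|_{1,\Th}^2$, and the discrete Poincar\'e--Friedrichs inequality supplies an $h$-independent $\widetilde{C}_h$ with $\|w_h\|_{1,\Th}\le\widetilde{C}_h|w_h|_{1,\Th}$ for $w_h\in V_h$. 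In either case, bounding the supremum in \eqref{EQ:NB2h} from below by the test element $v_h=u_h$ shows $\alpha_h\ge c>0$ independently of $h$.

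The remaining hypothesis of Theorem~\ref{th:Strang2} to be verified is the continuous extension of $a_h$ onto $\bigl(W+U_h,\|\cdot\|_{1,\Th}\bigr)\times\bigl(V_h,\|\cdot\|_{V_h}\bigr)$ with an $h$-independent constant $\widetilde{M}_h$. The element contributions $\sum_K(\vecK\nabla w-\vecc w,\nabla v_h)_K$, $(rw,v_h)$ and $(\talpha w,v_h)_{\wtGamma_2}$ are routine: Cauchy--Schwarz, the $L^\infty$-bounds on the data, a (discrete) trace inequality on $\wtGamma_2$, and the discrete Poincar\'e--Friedrichs inequality to trade $\|v_h\|_0$ for $|v_h|_{1,\Th}$. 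The genuinely delicate term is the upwind face sum $\sum_{F\in\mathcal F\cup\mathcal F_3}(\cupw{w},\jump{v_h})_F$: writing each summand as $\bigl(h_F^{1/2}\|\cupw{w}\|_{0,F}\bigr)\bigl(h_F^{-1/2}\|\jump{v_h}\|_{0,F}\bigr)$, bound the first factor by $C\|w\|_{1,K}$ via the multiplicative trace inequality and Young's inequality (with $h_K$ bounded), and bound the second factor by $C\bigl(|v_h|_{1,K}+|v_h|_{1,K'}\bigr)$ by subtracting the barycentric value $v_h(a_{S,F})$ --- which leaves $\jump{v_h}$ unchanged since, for $v_h\in V_h$, $\jump{v_h}$ restricted to $F$ is affine with vanishing mean and hence vanishes at $a_{S,F}$ --- and then applying the scaled trace inequality and \eqref{eq:constantsindependent-1}, exactly as in the treatment of the second factor of \eqref{eq2020:kap81_822}. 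Summation over all faces, each element being counted only a bounded number of times, yields $|a_h(w,v_h)|\le\widetilde{M}_h\|w\|_{1,\Th}\|v_h\|_{V_h}$ with $\widetilde{M}_h$ independent of $h$.

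With all hypotheses verified, part~1) of Remark~\ref{folg:DG_analysis_conv} gives
\[
\|u-u_h\|_{1,\Th}\ \le\ \Bigl(1+\frac{\widetilde{C}_h\widetilde{M}_h}{\alpha_h}\Bigr)\inf_{w_h\in V_h}\|u-w_h\|_{1,\Th}\ +\ \frac{\widetilde{C}_h}{\alpha_h}\sup_{v_h\in V_h\setminus\{0\}}\frac{|a_h(u,v_h)-\ell_h(v_h)|}{\|v_h\|_{V_h}}\,.
\]
The infimum is bounded by $Ch|u|_{2,\Th}$, using that the conforming $\mathcal{P}_1$-space is contained in $V_h$; the supremum is bounded by $Ch|\vecK\nabla u|_{1,\Th}$ by means of \eqref{eq:CRconsistency} (together with $|v_h|_{1,\Th}\le\|v_h\|_{V_h}$ and the smoothness assumption $\vecK\nabla u\in H^1(\Th)^d$). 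Since $\widetilde{M}_h$, $\alpha_h$, $\widetilde{C}_h$ and all further constants are independent of $h$, the asserted estimate $\|u-u_h\|_{1,\Th}\le Ch\bigl[|u|_{2,\Th}+|\vecK\nabla u|_{1,\Th}\bigr]$ follows.

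I expect the only real obstacle to be the proof that the continuity constant $\widetilde{M}_h$ can be chosen independently of $h$ --- and within it the control of the upwind face sum, where the zero-mean (hence affine-and-barycentre-vanishing) structure of $\jump{v_h}$ for $v_h\in V_h$ is indispensable. Every other step reduces to a direct appeal to the coercivity estimate, \eqref{eq:CRconsistency}, and the $\mathcal{P}_1$-approximation bound already available, together with standard trace and Poincar\'e--Friedrichs inequalities.
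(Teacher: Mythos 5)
Your proposal is correct and follows essentially the same route as the paper: $h$-uniform coercivity of $a_h$ (via the continuous and discrete Poincar\'e--Friedrichs inequalities in case \eqref{eq:addcoerccond1}, resp.\ the zeroth-order term in case \eqref{eq:addcoerccond2}), the consistency bound \eqref{eq:CRconsistency} obtained from the zero-mean jump property of $CR_1$ functions, the $\mathcal{P}_1$-approximation estimate, and an appeal to Theorem~\ref{th:Strang2} / Remark~\ref{folg:DG_analysis_conv}. The only difference is that you explicitly verify the $h$-uniform continuity constant $\widetilde{M}_h$ (including the upwind face sum), a hypothesis the paper invokes but does not spell out in this section; your argument for it is sound.
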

The error bound can be simplified if additional smoothness of the diffusion coefficient $\vecK$ is assumed.
\begin{corollary}\label{cor:CRconv}
In addition to the assumptions of Theorem~\ref{th2020:kap81_shaperegular},
let $\vecK\in W^{1,\infty}(\Tnoh)$.
Then, for the solution $u_h\in V_h\subset CR_1(\Om)$
of the discrete problem \eqref{eq:defCR1}--\eqref{eq2020:kap81_817}, the error estimate
\[
\|u - u_h\|_{1,\Tnoh} \le Ch |u|_{2,\Tnoh}
\]
with a constant $C>0$ independent of $h$ holds.
\end{corollary}
So we have seen that the effect of including inter-element boundary terms in
the discrete formulation is to guarantee the coercivity.
They have no influence on the consistency error.

\subsubsection*{Convergence order in a weaker norm}

In order to be able to apply Theorem~\ref{l:weakererror}, the adjoint problem \eqref{eq:specialadjoint}
and its discretization \eqref{eq:discrspecialadjoint} have to be investigated.
The adjoint problem \eqref{eq:specialadjoint} with $g\in Z:=L^2(\Om)$ is given by the forms
\begin{equation}\label{eq:adjoint1}
\begin{aligned}
a^\prime(v,w) &:= a(w,v)
= (\vecK \nabla w - \vecc w, \nabla v) + (rw,v)
+ (\talpha w, v)_{\wtGamma_2}\\
&= (\vecK\nabla v,\nabla w) - (\vecc\cdot\nabla v,w) + (rv,w)
+ (\talpha v,w)_{\wtGamma_2},\\
\tilde\ell(w)&:=(w,g)
\quad\text{for all } v,w \in V,
\end{aligned}
\end{equation}
hence the adjoint problem corresponds to the following formal boundary value problem
in nondivergence form:
\begin{align}
- \nabla \cdot \left( \vecK\nabla v \right) - \vecc \cdot \nabla v + rv &= g \quad\text{in } \Om,
\label{eq:strongadjointde}\\
\vecK\nabla v \cdot \vecnu &= 0
\quad\text{on } \wtGamma_1\,,
\nonumber\\
\vecK\nabla v \cdot \vecnu + \talpha v &= 0
\quad\text{on } \wtGamma_2\,,
\nonumber\\
v &= 0
\quad\text{on } \wtGamma_3\,.
\nonumber
\end{align}
Analogous to the continuous case, the discrete adjoint problem is defined
as the adjoint of the discrete problem \eqref{eq2020:kap81_823}:
\[
a_h^\prime(v,w) := a_h(w,v) := \sum_{K \in\Tnoh} (\vecK \nabla w - \vecc w, \nabla v)_K + (rw,v)
+ \sum_{F \in \mathcal{F} \cup \mathcal{F}_3} (\cupw{w},\jump{v})_F
+ (\talpha w, v)_{\wtGamma_2}.
\]
Obviously, the $V_h$-coercivity constant of $a_h^\prime$ is the same
as that of $a_h$.

The consistency error
\begin{align*}
&\quad a_h^\prime(v,w_h) - \tilde\ell(w_h)\\
&= \sum_{K \in\Tnoh} (\vecK \nabla w_h - \vecc w_h, \nabla v)_K + (rw_h,v)
+ \sum_{F \in \mathcal{F} \cup \mathcal{F}_3} (\cupw{w_h},\jump{v})_F
+ (\talpha w_h, v)_{\wtGamma_2} - (g,w_h)
\end{align*}
can be split into the consistency error of the symmetric part
(cf.\ \eqref{eq2020:kap81_821c} for the relevant diffusion term)
and the nonsymmetric part
\[
-\sum_{K \in\Tnoh} (\vecc w_h, \nabla v)_K
+ \sum_{F \in \mathcal{F} \cup \mathcal{F}_3} (\cupw{w_h},\jump{v})_F
=-(\vecc w_h, \nabla v)
+ \sum_{F \in \mathcal{F} \cup \mathcal{F}_3} (\cupw{w_h},\jump{v})_F.
\]
The first term is part of the strong form (in the sense of $L^2(\Om)$) of the differential equation,
whereas the second term vanishes. Therefore the consistency error estimation
is reduced to the estimation of the consistency error
of the symmetric part.
Thanks to symmetry, the estimate can be taken directly from \eqref{eq:CRconsistency}
in the proof of Theorem~\ref{th2020:kap81_shaperegular}.
Consequently, the error estimate of Theorem~\ref{th2020:kap81_shaperegular}
also applies to the adjoint problem.

In order to apply Theorem~\ref{l:weakererror} we have to assume that the adjoint problem
is regular in the sense that for any right-hand side $g \in L^2(\Om)$
a unique solution $v_g \in V \cap H^2(\Tnoh)$ of the adjoint boundary value problem
\eqref{eq:specialadjoint} exists and
a stability estimate of the form
\begin{equation}\label{eq:adjointregularity1}
| v_g |_{2,\Tnoh} \le \tilde{C} \| g\|_0 \quad \text{for given } g \in L^2(\Om),
\end{equation}
is satisfied with some constant $\tilde{C} > 0$ (\ie, \eqref{eq2020:kap61_6176} holds with $Y:=V \cap H^2(\Tnoh)$).
Then the first term in the estimate of Theorem~\ref{l:weakererror}
can be bounded from above by $Ch^2 |u|_{2,\Tnoh}$, and the fourth term vanishes.
The third term, a consistency error for the original problem,
can be estimated analogously to \eqref{eq2020:kap81_822}
and the subsequent considerations, but with $v_h$ substituted by $v_{g} - v_{gh}$,
\ie, the approximation error of the
adjoint problems \eqref{eq:specialadjoint}, \eqref{eq:discrspecialadjoint}.

Thus the final estimate reads (compare \eqref{eq:CRconsistency} in the proof
of Theorem~\ref{th2020:kap81_shaperegular}):
\[
Ch |\vecK\nabla u|_{1,\Tnoh} \, |v_{g} - v_{gh}|_{1,\Tnoh}
\le Ch^2 |\vecK\nabla u|_{1,\Tnoh} \, |v_{g}|_{2,\Tnoh}
\le Ch^2 |\vecK\nabla u|_{1,\Tnoh} \|g\|_{0,\Om},
\]
which is the required relationship.

It remains to discuss the second term, a consistency error of the adjoint problem.
It can be treated similarly to the third term (with interchanged roles $u \leftrightarrow v_{g}$
and $v_{g} - v_{gh} \leftrightarrow u - u_h$), resulting in the bound
\[
Ch |\vecK\nabla v_g|_{1,\Tnoh} \, |u - u_h|_{1,\Tnoh}.
\]
But since we only have the estimate \eqref{eq:adjointregularity1}
of the $|\cdot|_{2,\Tnoh}$-seminorm of $v_g$, we need an additional assumption,
for instance a regularity requirement to $\vecK$ as in Corollary~\ref{cor:CRconv}.
Under this assumption we can apply Corollary~\ref{cor:CRconv} to estimate
both consistency errors.
In summary, we can formulate the following result.
\begin{theorem}
Let the family of triangulations be shape-regular,
$\vecK\in W^{1,\infty}(\Tnoh)$,
the weak solution $u \in V\cap H^2(\Tnoh)$ be such that \eqref{eq:7.17a} is satisfied,
and the solution of the adjoint problem \eqref{eq:specialadjoint} be regular such that
\eqref{eq:adjointregularity1} is satisfied.
Then, under the conditions \eqref{eq:coercivityconddviform}, \eqref{eq:addcoerccond1}
or \eqref{eq:coercivityconddviform}, \eqref{eq:addcoerccond2},
the first-order Crouzeix-Raviart solution $u_h\in V_h\subset CR_1(\Om)$
of the discrete problem \eqref{eq:defCR1}--\eqref{eq2020:kap81_817}
satisfies the error estimate
\[
\|u - u_h\|_{0,\Om} \le Ch^2 |u|_{2,\Tnoh},
\]
where $C>0$ is a constant independent of $h$.
\end{theorem}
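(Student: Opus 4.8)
The plan is to apply Theorem~\ref{l:weakererror} with the choices $Z := L^2(\Om)$, $Z' = Z'' = L^2(\Om)$, $W := V\cap H^2(\Tnoh)$ with the regularity \eqref{eq:7.17a}, and $Y := V\cap H^2(\Tnoh)$ for the adjoint problem. First I would verify the hypotheses of the theorem: the extended boundedness \eqref{eq:contofah} of $a_h$ on $(W+U_h)\times(Y+V_h)$ holds with $\|\cdot\|_{U(h)} = \|\cdot\|_{V(h)} = |\cdot|_{1,\Tnoh}$ (augmented by a lower-order term as in the coercivity discussion) since each term of $a_h$ in \eqref{eq2020:kap81_823} is controlled by the broken $H^1$-seminorm, using trace inequalities for the face terms; and $\ell_h = \ell$ extends trivially. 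I also need the discrete adjoint solution $v_{gh}\in V_h$ of \eqref{eq2020:kap61_617echt'''} to exist uniquely, which follows from the $V_h$-coercivity of $a_h^*$ (same constant as $a_h$, as noted in the text).

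Then I would bound the four terms of \eqref{eq2020:kap61_6174} in turn. The fourth (approximation) term vanishes because $a_h = a$ on $V\times V$ for the consistent-looking discretization here — more precisely, $\ell_h = \ell$ and $(a_h - a)(u, v_g)$ reduces to inter-element face contributions of $\cupw{u}$ against $\jump{v_g}$, which vanish since $u\in H^1(\Om)$ gives $\jump{u} = \bzero$ and $v_g\in V$ gives $\jump{v_g} = \bzero$ on interior faces — so this term is zero. The first term is $\widetilde{M}_h\,\|u-u_h\|_{U(h)}\,\|v_g - v_{gh}\|_{V(h)}$; using Corollary~\ref{cor:CRconv} (which needs $\vecK\in W^{1,\infty}(\Tnoh)$) for $\|u-u_h\|_{1,\Tnoh}\le Ch|u|_{2,\Tnoh}$, and the analogue of Theorem~\ref{th2020:kap81_shaperegular} for the adjoint problem together with the regularity \eqref{eq:adjointregularity1} for $\|v_g - v_{gh}\|_{1,\Tnoh}\le Ch|v_g|_{2,\Tnoh}\le Ch\tilde C\|g\|_0$, this term is $\le Ch^2|u|_{2,\Tnoh}\|g\|_0$. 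The third term is the consistency error $a_h(u, v_g - v_{gh}) - \ell_h(v_g - v_{gh})$ of the \emph{original} problem tested against $v_g - v_{gh}$; rerunning the computation that led to \eqref{eq:CRconsistency} with $v_h$ replaced by $v_g - v_{gh}$ gives $\le Ch|\vecK\nabla u|_{1,\Tnoh}\,|v_g - v_{gh}|_{1,\Tnoh}\le Ch^2|\vecK\nabla u|_{1,\Tnoh}\|g\|_0$, and under $\vecK\in W^{1,\infty}(\Tnoh)$ this is absorbed into $Ch^2|u|_{2,\Tnoh}\|g\|_0$.

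The second term is the consistency error $a_h(u-u_h, v_g) - \lb u-u_h, g\rb$ of the \emph{adjoint} problem tested against $u-u_h$. By the symmetry discussion in the text, the consistency error of $a_h^*$ reduces to that of its symmetric part, which has exactly the structure of \eqref{eq:CRconsistency} with the roles $u\leftrightarrow v_g$ exchanged; hence this term is $\le Ch|\vecK\nabla v_g|_{1,\Tnoh}\,|u-u_h|_{1,\Tnoh}$. Here is the one delicate point: \eqref{eq:adjointregularity1} only controls $|v_g|_{2,\Tnoh}$, not $|\vecK\nabla v_g|_{1,\Tnoh}$, so I need $\vecK\in W^{1,\infty}(\Tnoh)$ to write $|\vecK\nabla v_g|_{1,\Tnoh}\le C\|v_g\|_{2,\Tnoh}\le C'(1 + C_{\mathrm{PF}})|v_g|_{2,\Tnoh}\le C''\|g\|_0$ (using the Poincaré–Friedrichs inequality to absorb lower-order terms), and then Corollary~\ref{cor:CRconv} for $|u-u_h|_{1,\Tnoh}\le Ch|u|_{2,\Tnoh}$, yielding $\le Ch^2|u|_{2,\Tnoh}\|g\|_0$.

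Summing the four contributions and dividing by $\|g\|_0$ in the supremum over $g\in Z\setminus\{0\}$ from \eqref{eq2020:kap61_6174} gives $\|u - u_h\|_{0,\Om}\le Ch^2|u|_{2,\Tnoh}$, which is the claim. The main obstacle is the bookkeeping around the second term: making sure the adjoint consistency error genuinely has the doubled order $h^2$ once tested against $u - u_h$ rather than an arbitrary $v_h$ — this requires that $u - u_h$ inherits the same face-cancellation/approximation behavior as a generic element of $V_h$ (it does, since $u - u_h = (u - v_h) + (v_h - u_h)$ with $v_h$ the $\mathcal{P}_1$-interpolant and both pieces estimated by the machinery already in the paper), and the regularity upgrade from $|v_g|_{2,\Tnoh}$ to $|\vecK\nabla v_g|_{1,\Tnoh}$, which is exactly why the extra hypothesis $\vecK\in W^{1,\infty}(\Tnoh)$ is imposed.
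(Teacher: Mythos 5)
Your proposal is correct and follows essentially the same route as the paper: it applies Theorem~\ref{l:weakererror} with $Z=L^2(\Om)$, shows the fourth term vanishes, bounds the first term via Corollary~\ref{cor:CRconv} and the adjoint analogue of Theorem~\ref{th2020:kap81_shaperegular} together with \eqref{eq:adjointregularity1}, and treats the second and third terms as adjoint/original consistency errors of the form \eqref{eq:CRconsistency} with the test function replaced by $u-u_h$ and $v_g-v_{gh}$, respectively. Your observation that the hypothesis $\vecK\in W^{1,\infty}(\Tnoh)$ is needed precisely to upgrade $|v_g|_{2,\Tnoh}$ to a bound on $|\vecK\nabla v_g|_{1,\Tnoh}$ in the second term matches the paper's own discussion.
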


\section{Example II: Discontinuous Galerkin methods}
\label{sec:DGM}
Based on the setting of Section \ref{sec:modelproblem},
here we consider more general consistent partitions of $\Om$
(not necessarily consisting of $d$-simplices only), namely
\emph{shape- and contact-regular} families of partitions, see \cite[Def.~1.38]{DiPietro:12}.
We also assume that all partitions are \emph{compatible} with
the structure of the boundary piece $\wtGamma_2$ in the sense that
there are subsets $\mathcal{F}_{2,1},\mathcal{F}_{2,2}\subset\mathcal{F}_{2}$
such that the following representation holds:
\begin{equation*}
\wtGamma_{2,1} = \bigcup_{F \in \mathcal{F}_{2,1}} F,
\quad
\wtGamma_2\setminus\wtGamma_{2,1} = \bigcup_{F \in \mathcal{F}_{2,2}} F\setminus\wtGamma_{2,1}.
\end{equation*}
This requirement is for clarity of presentation only. In principle, it can be omitted
if the correct integration regions, which then do not have to be complete faces,
are specified for the corresponding integrations.

We use the finite element spaces
\[
U_h:=V_h:= \mathbb{P}_k(\Tnoh) := \{ w_h \in L^2(\Om) \;|\; w_h|_K \in
\mathbb{P}_k(K) \quad\text{for all }K \in\Tnoh\},
\]
where $\mathbb{P}_k(K):=\mathcal P_k(K)$ or $\mathbb{P}_k(K):=\mathcal{Q}_k(K)$.
Here $\mathcal{Q}_k(K)$ denotes the set of tensor-product polynomials on $K$, which
is composed of $d$-variate polynomials of maximum partial degree $k$ with respect to each variable.
We also allow inhomogeneous Dirichlet boundary conditions
on $\wtGamma_3$, \ie, $\tg_3$ may be a nontrivial function.
For a fixed \emph{symmetrization parameter} $\theta \in \{0,\pm 1\}$
and a \emph{penalty parameter} $\mu > 0$,
the interior penalty discontinuous Galerkin method, in short \emph{IPG} method, reads as follows:

\medskip\quad
Find $u_h \in V_h$ such that
\begin{equation}\label{EQ:IPG_global}
a_h(u_h,v_h) = \ell_h(v_h) \quad\text{for all } v_h \in V_h,
\end{equation}
where
\begin{equation}\label{eq2020:kap_85_8631}
\begin{aligned}
a_h(u_h,v_h) &:= \sum_{K \in\Tnoh} \big[(\vecK \nabla u_h - \vecc u_h, \nabla v_h)_K + (ru_h,v_h)_K\big]\\
& + \sum_{F \in \mathcal{F} \cup \mathcal{F}_3}
\Big[\theta(\avg{\vecK \nabla v_h},\jump{u_h})_F
- \Big(\avg{\vecK \nabla u_h}
- \frac{\mu}{h_F}\jump{u_h},\jump{v_h}\Big)_F\Big]\\
& + \sum_{F \in \mathcal{F} \cup \mathcal{F}_{2,1}} (\cupw{u_h},\jump{v_h})_F
+ \sum_{F \in \mathcal{F}_{2,2}} (\talpha u_h, v_h)_F,\\
\ell_h(v_h) &:= (f,v_h)
+ \sum_{F \in \mathcal{F}_1} (\tg_1,v_h)_F
+ \sum_{F \in \mathcal{F}_2} (\tg_2,v_h)_F\\
& + \sum_{F \in \mathcal{F}_3} \Big[\frac{\mu}{h_F} (\tg_3, v_h)_F
+ (\theta \vecK \nabla v_h - \vecc v_h,\vecnu \tg_3)_F\Big].
\end{aligned}
\end{equation}
The parameter value $\theta = 0$ gives the \emph{incomplete IPG} (\emph{IIPG}) method, while
the choice $\theta = -1$ results in the \emph{symmetric IPG} (\emph{SIPG}).
The value $\theta = 1$ yields the \emph{nonsymmetric IPG} (\emph{NIPG}) method.
The artificial symmetrization term has no influence on the consistency properties
of the method (cf.\ the subsequent Lemma~\ref{LEM:stability_IPDG}),
nor does it generate an additional numerical flux on the interior element faces.
Based on an idea by Nitsche, the Dirichlet boundary conditions are weakly imposed.

Next we will show that the IPG methods can be characterized as consistent but nonconforming methods
(cf.\ Section~\ref{sec:GM_VarEq}).
Analogously to Section~\ref{sec:NonconformingFE}, we assume a sufficient regularity of the solution
of the continuous problem as in \eqref{eq:7.17a}.

\begin{lemma}
\label{LEM:stability_IPDG}
If $u\in V\cap H^2(\Tnoh)$ is the weak solution of the problem \eqref{eq:-1.33lin}--\eqref{eq2020:kap_32_334_ofdivergence}
satisfying the regularity condition \eqref{eq:7.17a}
(this defines the regularity space $W$), then, for $k\in\N$,
the above IPG methods are consistent, \ie,
\[
a_h(u,v_h) = \ell_h(v_h) \quad\text{for all } v_h \in V_h.
\]
\end{lemma}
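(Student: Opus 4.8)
The plan is to substitute the weak solution $u$ into the discrete form $a_h$ of \eqref{eq2020:kap_85_8631} and to reduce $a_h(u,v_h)$ to $\ell_h(v_h)$ by the same manipulations used for \eqref{eq:CRconsistency}, now additionally tracking the symmetrization and penalty terms and the weakly imposed Dirichlet data. First I would integrate by parts elementwise in the volume diffusion term $\sum_K(\vecK\nabla u,\nabla v_h)_K$ and the convection term $-\sum_K(\vecc u,\nabla v_h)_K$, collecting the arising face integrals via the magic formula \eqref{eq:magicf}; this is the computation behind \eqref{eq2020:kap81_821c} and \eqref{eq2020:kap81_828}, which uses only the consistency of the partition and the regularity \eqref{eq:7.17a}. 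After this step, $a_h(u,v_h)$ is a sum of the $L^2(\Om)$-pairing $(-\nabla\cdot(\vecK\nabla u-\vecc u)+ru,v_h)$, interior-face and boundary-face terms coming from the integration by parts, and the $\theta$- and $\eta/h_F$-terms from the $\mathcal{F}\cup\mathcal{F}_3$ sum in \eqref{eq2020:kap_85_8631}.

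Next I would clear the interior faces. Since $u\in H^1(\Om)$, one has $\jump{u}_F=\bzero$ for every $F\in\mathcal{F}$ (\cite[Lemma 1.23]{DiPietro:12}), so on interior faces the symmetrization terms $\theta(\avg{\vecK\nabla v_h},\jump{u})_F$ and the penalty terms $\frac{\eta}{h_F}(\jump{u},\jump{v_h})_F$ vanish, and the upwind flux $\cupw{u}$ collapses to the single-valued trace $\vecc u$; then $\sum_{F\in\mathcal{F}}(\cupw{u},\jump{v_h})_F$ cancels the $\avg{\vecc u}$-contribution from the convection integration by parts, while $-(\avg{\vecK\nabla u},\jump{v_h})_F$ cancels the corresponding $\avg{\vecK\nabla u}$-contribution from the diffusion integration by parts. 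What survives on $\mathcal{F}$ is $\sum_{F\in\mathcal{F}}(\jump{\vecK\nabla u-\vecc u},\avg{v_h})_F$, which vanishes by \eqref{eq:7.17c}. Thus $a_h(u,v_h)$ reduces to its volume term plus boundary-face terms, and since $u$ solves \eqref{eq:-1.33lin} in $L^2(\Om)$, the volume term equals $(f,v_h)$.

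It then remains to match the boundary-face terms with the boundary contributions of $\ell_h$. On $\wtGamma_1$ the surviving diffusion and convection fluxes combine to $((\vecK\nabla u-\vecc u)\cdot\vecnu,v_h)_F$, which by the first condition in \eqref{eq2020:kap_32_334_ofdivergence} is $(\tg_1,v_h)_F$. On $\wtGamma_2$ the same combination is $(\tg_2-\talpha u,v_h)_F$; adding the $\mathcal{F}_{2,2}$ term $(\talpha u,v_h)_F$ — respectively, on $\mathcal{F}_{2,1}$, noting that the upwind term reproduces $(\talpha u,v_h)_F$ because there $\talpha=\vecnu\cdot\vecc\ge0$ by \eqref{eq:coercivityconddviform},2) — gives $(\tg_2,v_h)_F$ in either case. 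Finally, on a Dirichlet face $F\in\mathcal{F}_3$ I would insert $u=\tg_3$, hence $\jump{u}_F=\tg_3\vecnu_K$: the diffusion flux $(\vecK\nabla u\cdot\vecnu,v_h)_F$ cancels $-(\avg{\vecK\nabla u},\jump{v_h})_F$, the penalty term becomes $\frac{\eta}{h_F}(\tg_3,v_h)_F$, the symmetrization term becomes $\theta(\vecK\nabla v_h\cdot\vecnu,\tg_3)_F$, and the convective boundary flux becomes $-(\vecc v_h\cdot\vecnu,\tg_3)_F$; their sum is exactly the $\mathcal{F}_3$ part of $\ell_h$ in \eqref{eq2020:kap_85_8631}. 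Collecting everything yields $a_h(u,v_h)=\ell_h(v_h)$.

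The only genuinely delicate point is the bookkeeping on $\mathcal{F}_3$ — correctly aligning the $\theta$-dependent symmetrization contribution and the convective boundary flux with the Nitsche data in $\ell_h$ — together with the case distinction between $\mathcal{F}_{2,1}$ and $\mathcal{F}_{2,2}$; everything else is the elementwise integration by parts already carried out for the Crouzeix-Raviart method, and it requires only the consistency of the partition together with \eqref{eq:7.17a}, not any metric shape-regularity.
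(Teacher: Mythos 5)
Your proposal is correct and follows essentially the same route as the paper's proof: exploit $\jump{u}=\bzero$ on interior faces and $u=\tg_3$ on $\mathcal{F}_3$ to kill or match the penalty, symmetrization and Nitsche terms, perform the elementwise integration by parts of \eqref{eq2020:kap81_821c} and \eqref{eq2020:kap81_828}, invoke \eqref{eq:7.17c} for the interior flux jumps, and finish with the differential equation in $L^2(\Om)$ and the boundary conditions in the trace sense. The only difference is the order of operations (the paper clears the face terms before integrating by parts, you do the reverse), which is immaterial.
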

\begin{proof}
Using the property $\jump{u}_F=\bzero$ on $F\in\mathcal{F}$,
it is not difficult to see that it holds, for an arbitrary test function $v_h \in V_h$:
\begin{align*}
&a_h(u,v_h) - \ell_h(v_h)\\
= & \sum_{K \in\Tnoh} \big[(\vecK \nabla u - \vecc u, \nabla v_h)_K + (ru,v_h)_K\big]
- \sum_{F \in \mathcal{F} \cup \mathcal{F}_3}(\avg{\vecK \nabla u},\jump{v_h})_F\\
& + \sum_{F \in \mathcal{F}_3}
\Big[\theta(\vecK \nabla v_h,\vecnu u)_F
+ \Big(\frac{\mu}{h_F}u,v_h\Big)_F\Big]\\
& + \sum_{F \in \mathcal{F} \cup \mathcal{F}_{2,1}} (\vecc u,\jump{v_h})_F
+ \sum_{F \in \mathcal{F}_{2,2}} (\talpha u, v_h)_F\\
& -(f,v_h)
- \sum_{F \in \mathcal{F}_1} (\tg_1,v_h)_F
- \sum_{F \in \mathcal{F}_2} (\tg_2,v_h)_F\\
& - \sum_{F \in \mathcal{F}_3} \Big[\frac{\mu}{h_F} (\tg_3, v_h)_F
- (\theta \vecK \nabla v_h - \vecc v_h,\vecnu \tg_3)_F\Big].
\end{align*}
Furthermore, since $u = \tg_3$ on $F \in \mathcal{F}_3$, we get
\begin{align*}
&a_h(u,v_h) - \ell_h(v_h)\\
= & \sum_{K \in\Tnoh} \big[(\vecK \nabla u - \vecc u, \nabla v_h)_K + (ru,v_h)_K\big]
- \sum_{F \in \mathcal{F} \cup \mathcal{F}_3}(\avg{\vecK \nabla u},\jump{v_h})_F\\
& + \sum_{F \in \mathcal{F} \cup \mathcal{F}_{2,1}} (\vecc u,\jump{v_h})_F
+ \sum_{F \in \mathcal{F}_{2,2}} (\talpha u, v_h)_F\\
& -(f,v_h)
- \sum_{F \in \mathcal{F}_1} (\tg_1,v_h)_F
- \sum_{F \in \mathcal{F}_2} (\tg_2,v_h)_F
+ \sum_{F \in \mathcal{F}_3} (\vecc v_h,\vecnu \tg_3)_F.
\end{align*}
The elementwise integration by parts (cf.~\eqref{eq2020:kap81_821c}, \eqref{eq2020:kap81_828})
yields, using the boundary conditions \eqref{eq2020:kap_32_334_ofdivergence}:
\begin{align*}
a_h(u,v_h)& - \ell_h(v_h)
= (- \nabla \cdot \left( \vecK \nabla u - \vecc u \right) + ru - f,v_h)\\
+ & (\left(\vecK\nabla u-\vecc u\right)\cdot\vecnu-\tg_1,v_h)_{\wtGamma_1}
+ (\left(\vecK\nabla u-\vecc u\right)\cdot\vecnu+\talpha u-\tg_2,v_h)_{\wtGamma_2}.
\end{align*}
The right-hand side vanishes since the differential equation \eqref{eq:-1.33lin}
is satisfied in the sense of $L^2(\Om)$,
and the boundary conditions \eqref{eq2020:kap_32_334_ofdivergence}
in the sense of the corresponding trace spaces.
\end{proof}
\begin{remark}[Interrelation with conventional FEM and Crouzeix-Raviart elements]
\label{REM:relations_DG}
\rule{0ex}{1ex}

\begin{enumerate}[1)]
\item
The use of the IPG bilinear and linear forms \eqref{eq2020:kap_85_8631}
together with conventional (continuous) finite element spaces $V_h \cap C^0(\overline{\Om})$
gives a conventional FEM with a different treatment of boundary conditions, since
all jumps on interfaces vanish due to the continuity of the ansatz and test functions.
\item
Compared to Section~\ref{sec:NonconformingFE}, the term
\[
\sum_{F\in\mathcal{F}\cup\mathcal{F}_3} (\vecK\nabla u,\jump{v_h})_F
\]
disappears from the consistency error representation
since the expression
\[
-\sum_{F\in\mathcal{F}\cup\mathcal{F}_3} (\avg{\vecK\nabla u_h},\jump{v_h})_F
\]
has been included in the bilinear form.
\end{enumerate}
\end{remark}

\subsubsection*{Stability of IPG methods}

In the next step we will demonatrate that a suitable norm $\|\cdot\|_{V_h}$ of energy type can be found
with respect to which the bilinear form $a_h$ is uniformly bounded and coercive.
Then the problem \eqref{EQ:IPG_global} can be solved uniquely in a stable manner.
A natural starting point is the NIPG method (\ie, $\theta = 1$),
since it contains comparatively few summands, which can be recasted
in such a way that finally the desired coercivity results.
For $v\in H^1(\Tnoh)$, we have the identity
\begin{equation}\label{eq:dGnorm}
\begin{aligned}
a_h^\text{NIPG}(v,v)
= & \sum_{K \in\Tnoh} \| \vecK^{1/2} \nabla v\|^2_{0,K}
+ \sum_{F \in \mathcal{F} \cup \mathcal{F}_3} \frac{\mu}{h_F}\|\jump{v}\|^2_{0,F}\\
& + \frac12 (2r + \nabla\cdot\vecc,v^2)
+ \frac12 \sum_{F \in \overline{\mathcal{F}} \setminus \mathcal{F}_{2,2}}
(| \vecc \cdot \vecnu |, \jump{v}^2)_F \\
& + \frac12\sum_{F \in \mathcal{F}_{2,2}}(2\talpha - \vecnu \cdot \vecc, v^2)_F
=: \|v\|^2_{V_h}
\end{aligned}
\end{equation}
(note that $\mu > 0$). Indeed, we can write:
\begin{align*}
& -\sum_{K \in\Tnoh} (v,\vecc\cdot\nabla v)_K
+ \sum_{F \in \mathcal{F} \cup \mathcal{F}_{2,1}} (\cupw{v},\jump{v})_F\\
= & -\frac12 \sum_{K \in\Tnoh} (\vecc,\nabla v^2)_K
+ \sum_{F \in \mathcal{F} \cup \mathcal{F}_{2,1}}
(\cupw{v},\jump{v})_F\\
= & \frac12 \sum_{K \in\Tnoh} (\nabla\cdot\vecc, v^2)_K
- \frac12 \sum_{F \in \mathcal{F}_1 \cup \mathcal{F}_{2,2} \cup \mathcal{F}_3}
(\vecc \cdot \vecnu, v^2)_F\\
& + \sum_{F \in \mathcal{F} \cup \mathcal{F}_{2,1}}
\Big[ (\cupw{v},\jump{v})_F - \frac12 (\vecc,\jump{v^2})_F\Big]\\
= & \frac12 \sum_{K \in\Tnoh} (\nabla\cdot\vecc, v^2)_K
+ \frac12 \sum_{F \in \overline{\mathcal{F}} \setminus \mathcal{F}_{2,2}}
(| \vecc \cdot \vecnu |, \jump{v}^2)_F
- \frac12 \sum_{F \in \mathcal{F}_{2,2}} (\vecc \cdot \vecnu, v^2)_F.
\end{align*}
The last equality is obtained as in the treatment of \eqref{eq2020:kap81_828},
using the sign conditions \eqref{eq:coercivityconddviform},
and the additional condition
\begin{equation}\label{eq:coercivityconddviform2}
\vecc \cdot \vecnu \le 0
\quad\text{on }\wtGamma_3.
\end{equation}

\begin{lemma}\label{lem2020:kap_85_816'}
Assume that, in addition to the conditions \eqref{eq:coercivityconddviform}
and \eqref{eq:coercivityconddviform2},
one of the conditions \eqref{eq:addcoerccond1} or \eqref{eq:addcoerccond2}
is satisfied.
Then, if $h>0$ is sufficiently small, \eqref{eq:dGnorm} defines a norm on $V+V_h$,
and there exists a constant $C > 0$ independent of $h$ such that
\[
\|v_h\|_{1, \Tnoh} \le C \|v_h\|_{V_h}
\quad \text{for all } v_h \in V+V_h.
\]
\end{lemma}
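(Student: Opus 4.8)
The plan is to read the norm $\|\cdot\|_{V_h}$ off the identity \eqref{eq:dGnorm}, where the sign structure has already been isolated, and then to distinguish the two cases \eqref{eq:addcoerccond1}, \eqref{eq:addcoerccond2}. First I would observe that $\|\cdot\|_{V_h}$ is automatically a \emph{semi}norm on $V+V_h$: it is the square root of a finite sum of squares of quantities each of which is itself a seminorm on $V+V_h\subset H^1(\Tnoh)$ — the elementwise $\vecK$-weighted gradient seminorm $\bigl(\sum_{K\in\Tnoh}\|\vecK^{1/2}\nabla v\|_{0,K}^2\bigr)^{1/2}$, the jump-penalty seminorm $\bigl(\sum_{F\in\mathcal F\cup\mathcal F_3}\tfrac{\eta}{h_F}\|\jump v\|_{0,F}^2\bigr)^{1/2}$, the weighted $L^2(\Om)$-seminorm with weight $(r+\tfrac12\nabla\cdot\vecc)^{1/2}$, and the weighted face seminorms with weights $|\vecc\cdot\vecnu|^{1/2}$ on $\overline{\mathcal F}\setminus\mathcal F_{2,2}$ and $(\talpha-\tfrac12\vecnu\cdot\vecc)^{1/2}$ on $\mathcal F_{2,2}$. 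All these are finite on $V+V_h$ (elementwise traces of broken $H^1$-functions on faces are well defined) and non-negative (by \eqref{eq:Kspd}, the sign conditions \eqref{eq:coercivityconddviform}, \eqref{eq:coercivityconddviform2}, and the triviality $|\vecc\cdot\vecnu|\ge 0$). Minkowski's inequality then yields the triangle inequality, homogeneity is clear, and so only positive definiteness and the asserted comparison with $\|\cdot\|_{1,\Tnoh}$ remain.

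Case \eqref{eq:addcoerccond2} is settled at once: bounding the diffusion term from below by \eqref{eq:Kspd}, using $r+\tfrac12\nabla\cdot\vecc\ge r_0$ in the reaction term, and discarding the remaining non-negative face terms gives
\[
\|v\|_{V_h}^2 \ \ge\ k_0\,|v|_{1,\Tnoh}^2 + r_0\,\|v\|_{0,\Om}^2 \ \ge\ \min\{k_0,r_0\}\,\|v\|_{1,\Tnoh}^2 ,
\]
which simultaneously proves $\|v\|_{V_h}=0\Rightarrow v=0$ and the inequality with $C=\min\{k_0,r_0\}^{-1/2}$; no restriction on $h$ is needed here.

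For case \eqref{eq:addcoerccond1} I would first extract, again from \eqref{eq:Kspd} and the non-negativity of the reaction and the remaining face terms, the lower bound
\[
\|v\|_{V_h}^2 \ \ge\ k_0\,|v|_{1,\Tnoh}^2 + \eta\sum_{F\in\mathcal F}\frac1{h_F}\|\jump v\|_{0,F}^2 + \eta\sum_{F\in\mathcal F_3}\frac1{h_F}\|v\|_{0,F}^2 ,
\]
using on $F\in\mathcal F_3$ that $\jump v_F=v|_K\vecnu_K$ with $|\vecnu_K|=1$. Definiteness then follows structurally: if the right-hand side vanishes, $\nabla v|_K=0$ on each $K$, so each $v|_K$ is constant; vanishing of all interior jumps forces these constants to agree, so $v$ equals one constant on the connected domain $\Om$; and vanishing of the $\mathcal F_3$-traces forces that constant to be $0$ because $|\wtGamma_3|_{d-1}>0$. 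For the quantitative bound I would invoke a discrete Poincaré–Friedrichs inequality for broken $H^1$-functions (in the spirit of \cite[Thm.~(10.6.12)]{Brenner:08} and \cite[Exercise 5.x.13]{Brenner:08}, see also \cite{DiPietro:12}): on the given shape-regular family there is a constant $C_\mathrm{PF}>0$ independent of $h$ with
\[
\|v\|_{0,\Om}^2 \ \le\ C_\mathrm{PF}\Bigl(|v|_{1,\Tnoh}^2 + \sum_{F\in\mathcal F}\frac1{h_F}\|\jump v\|_{0,F}^2 + \sum_{F\in\mathcal F_3}\frac1{h_F}\|v\|_{0,F}^2\Bigr) \quad\text{for all } v\in V+V_h .
\]
Combining this with the previous lower bound yields $\|v\|_{1,\Tnoh}^2=|v|_{1,\Tnoh}^2+\|v\|_{0,\Om}^2\le C\,\|v\|_{V_h}^2$ with $C$ independent of $h$, which also re-proves definiteness; the smallness of $h$ recorded in the statement is invoked only at this step.

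The step I expect to be the genuine obstacle is exactly this discrete Poincaré–Friedrichs inequality in case \eqref{eq:addcoerccond1}: one has to make sure (i) it holds for \emph{all} of $V+V_h\subset H^1(\Tnoh)$ — not just the broken polynomial space $V_h$, so that elements with a nontrivial conforming component are covered — and (ii) its constant is genuinely $h$-uniform on the shape-regular family. The standard route for (ii) is a mesh-independent averaging (enrichment) operator sending $V+V_h$ into $H^1(\Om)$, combined with the continuous Poincaré–Friedrichs inequality on $V$ and a trace/jump estimate controlling the correction by $|v|_{1,\Tnoh}$ and the jump terms; this is the part that must be carried out with care. Everything else is bookkeeping with the sign conditions \eqref{eq:coercivityconddviform}, \eqref{eq:coercivityconddviform2} and the ellipticity bound \eqref{eq:Kspd}.
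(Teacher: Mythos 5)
Your proposal is correct, and case \eqref{eq:addcoerccond2} coincides verbatim with the paper's argument ($\|v\|_{V_h}^2\ge k_0|v|_{1,\Tnoh}^2+r_0\|v\|_{0,\Om}^2$, no smallness of $h$ needed). In case \eqref{eq:addcoerccond1} you take a genuinely different route. The paper argues componentwise: it writes an element of $V+V_h$ as $v+v_h$, applies the continuous Poincar\'e--Friedrichs inequality \cite[Exercise 5.x.13]{Brenner:08} to the conforming part $v\in V$ and the discrete Poincar\'e inequality of \cite[Thm.~5.3]{DiPietro:12} to the broken polynomial part $v_h\in V_h$, and then combines the two via the (infimum-over-decompositions) norm on the sum space. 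You instead invoke a single broken Poincar\'e--Friedrichs inequality, valid on all of $V+V_h\subset H^1(\Tnoh)$ with $h$-uniform constant, bounding $\|v\|_{0,\Om}^2$ by $|v|_{1,\Tnoh}^2$ plus the $h_F^{-1}$-weighted jump and $\mathcal F_3$-trace terms, which are exactly the quantities present in \eqref{eq:dGnorm}. This is a true and standard result (Brenner's Poincar\'e--Friedrichs inequalities for piecewise $H^1$ functions; the enrichment-operator proof you sketch is the usual one), and you correctly single it out as the only nontrivial input. Your route is arguably cleaner: it avoids the delicate point in the paper's splitting argument that the seminorms of the (non-unique) components $v$ and $v_h$ are not individually controlled by the seminorm of their sum, and it delivers positive definiteness and the quantitative bound in one stroke. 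What it costs is that you must verify the broken inequality for all of $V+V_h$ rather than being able to quote results stated separately for $V$ and for broken polynomial spaces; since the literature you cite covers piecewise $H^1$, this is not a gap, and your level of reliance on external Poincar\'e-type results matches the paper's.
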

\begin{proof}
The nonnegativity of all terms in \eqref{eq:dGnorm} immediately yields the estimate
\[
\|v\|_{V_h}^2 \ge \sum_{K \in\Tnoh} \| \vecK^{1/2} \nabla v \|^2_{0,K}
\ge k_0 | v |^2_{1, \Tnoh},
\]
which shows that $\| \cdot \|_{V_h}$ is a seminorm on $H^1(\Tnoh)$.
Morever, the condition $\|v\|_{V_h} = 0$ implies that the element $v$ is piecewise constant.
The additional conditions together with the structure \eqref{eq:dGnorm} of $\|\cdot\|_{V_h}$
lead to $v=0$.

Indeed, as in Section~\ref{sec:NonconformingFE},
the case b) yields the estimate
$\|v\|_{V_h}^2 \ge \min\{k_0;r_0\}\|v\|_{1,\Tnoh}^2$ for all $v \in H^1(\Tnoh)$.
To prove a) we combine, as in Section~\ref{sec:NonconformingFE},
the Poincar\'{e}-Friedrichs inequality \cite[Exercise 5.x.13]{Brenner:08}
on $V$ with a discrete inequality on $V_h$.
Namely, as a consequence of \cite[Thm.~5.3]{DiPietro:12},
there is a constant $C_\mathrm{PI} > 0$ independent of $h > 0$
such that following refined Poincar\'{e} inequality holds:
\[
\| v_h \|_0 \le C_\mathrm{PI} \| v_h \|_{V_h}
\quad \text{for all } v_h \in V_h.
\]
\end{proof}
The following lemma summarizes the hitherto obtained properties of the bilinear and linear forms.
\begin{lemma}\label{LEM:DG_stability}
Assume that $\vecK$ is piecewise continuous, \ie, continuous on each $K\in\Tnoh$.
The bilinear form $a_h$ and the linear form $\ell_h$ are bounded on $V_h$ with
respect to $\| \cdot \|_{V_h}$, not necessarily uniform with respect to $h$.
If the condition
\begin{equation}\label{EQ:stab_condition2}
4\mu > (1 - \theta)|\mathcal{F}_K| C_\mathrm{tr}^2 \|\vecK\|_\infty,
\quad\text{where}\quad
|\mathcal{F}_K|:=\begin{cases}
d+1, & \mathbb{P}_k(K)=\mathcal P_k(K),\\
2d, &\mathbb{P}_k(K)=\mathcal{Q}_k(K)
\end{cases}
\end{equation}
is satisfied, the bilinear form $a_h$ is uniformly coercive with respect to $\| \cdot \|_{V_h}$
with the parameter $\alpha = \alpha_h$ independent of $h$ but depending on $\mu$.

The NIPG method is uniformly coercive with $\alpha = 1$ if $\mu > 0$.
\end{lemma}
\begin{proof}
The last statement follows immediately from the representation
\[
a_h(v_h,v_h) = \|v_h\|^2_{V_h}
- (1 - \theta) \sum_{F \in \mathcal{F} \cup \mathcal{F}_3}
(\avg{\vecK \nabla v_h},\jump{v_h})_F
\quad\text{for all }
v_h \in \mathbb{P}_k(\Tnoh).
\]
For $\theta\ne 1$, we proceed as follows.
The properties of $\vecK$, a discrete trace inequality \cite[Lemma~1.46]{DiPietro:12}
(here we need the shape- and contact-regularity of the family of partitions)
and Young's inequality with $\eps>0$
allow the estimation
\begin{align*}
|(\avg{\vecK \nabla v_h},\jump{v_h})_F|
&\le \| \avg{\vecK \nabla v_h} \|_{0,F} \|\jump{v_h} \|_{0,F} \\
&\le \|\vecK\|_\infty^{1/2} C_\mathrm{tr} h_F^{-1/2} \| \vecK^{1/2} \nabla v_h \|_{0,\Delta(F)}
\|\jump{v_h} \|_{0,F} \\
&\le \eps \| \vecK^{1/2} \nabla v_h \|^2_{0,\Delta(F)}
+ \frac{ C^2_\mathrm{tr} \|\vecK\|_\infty}{4\eps h_F} \| \jump{v_h}\|^2_{0,F},
\end{align*}
where $\Delta(F)$ denotes the union of the elements $K$ with face $F$.
Since every element $K$ has $|\mathcal{F}_K|$ faces, the first term
occurs at most $|\mathcal{F}_K|$ times after the summation.
So if $\eps$ is chosen such that $\eps (1 - \theta)|\mathcal{F}_K| < 1$,
the first two terms in \eqref{eq:dGnorm} absorb the respective terms in the above bound,
where condition \eqref{EQ:stab_condition2} is applied to the second term.

The boundedness (not necessarily uniform in $h$) is obvious since all bilinear and linear forms
on finite-dimensional spaces are bounded.
\end{proof}
Thus, the Lax-Milgram lemma ensures the existence of a unique solution to \eqref{EQ:IPG_global}.
\begin{remark}
\begin{enumerate}[1)]
\item
Lemma \ref{LEM:DG_stability} remains valid for certain nonsimplicial
and nonconsistent partitions provided that $|\mathcal{F}_K|$ is replaced
by the maximum number of faces of an element.
\item
Be means of more sophisticated techniques it is possible to show
that the NIPG method is also stable for $\mu = 0$. This procedure,
known as the \emph{OBB method}, goes back to Oden, Babu\v{s}ka, and Baumann \cite{Oden:98}.
\end{enumerate}
\end{remark}

In Lemma~\ref{LEM:DG_stability} it was already mentioned that the boundedness constants
may be $h$-dependent. This problem can be circumvented by finding a framework that allows
the application of Remark~\ref{folg:DG_analysis_conv},~2).
That is we try to specify a suitable normed space $(V(h), \|\cdot\|_{V(h)})$
in which $a_h$ is bounded.

\subsubsection*{Convergence analysis for the complete problem}

Assume that the weak solution $u$ of the problem \eqref{eq:-1.33lin}--\eqref{eq2020:kap_32_334_ofdivergence}
belongs to $V\cap H^2(\Tnoh)$.
In order to fulfill the assumptions of Remark~\ref{folg:DG_analysis_conv},~2),
we first construct a space $V(h) \supset\mathbb{P}_k(\Tnoh) + \spann(u)$
such that $\|\cdot\|_{V_h}$ is a norm on $V(h)$.
A suitable choice clearly is
\[
V(h) = H^2(\Tnoh) := \{ v \in L^2(\Om) \;|\; v \in H^2(K)
\quad\text{for all } K \in\Tnoh \}.
\]
However, on $H^2(\Tnoh)$ we cannot apply a discrete trace inequality
to control $\avg{\vecK \nabla u_h}_F$ on the faces $F \in \mathcal{F} \cup \mathcal{F}_3$
(cf.\ the proof of Lemma \ref{LEM:DG_stability}).

Therefore the norm $\| \cdot \|_{V_h}$ should be extended in such a way
that this term can also be controlled while retaining the uniform boundedness
of the (extended) bilinear form $a_h$ on $V(h) \times V_h$.
A possible choice motivated by this is
\begin{equation}\label{eq:defDGnorm}
\| w_h \|^2_{V(h)} := \| w_h \|^2_{V_h} + \sum_{F \in \mathcal{F} \cup
\mathcal{F}_3} \frac{h_F}{\mu} \| \avg{\vecK^{1/2} \nabla w_h} \|^2_{0,F}
+ \sum_{F \in \overline{\mathcal F}} \frac{h_F}{\mu} \| \vecc w_h \|^2_{0,F}
\quad\text{for all } w_h \in V(h),
\end{equation}
where the last term is to be understood in such a way that both one-sided traces
are taken into account (\ie, it is evaluated twice).
\begin{lemma}\label{LEM:continuity_in_orthogonal_spaces}
Let the assumptions of Lemma~\ref{lem2020:kap_85_816'} be satisfied.
Then there exists a constant $\widetilde{M}_h>0$ bounded in $h$ such that
\[
a_h(w_h,v_h) \le \widetilde{M}_h \|w_h\|_{V(h)} \|v_h\|_{V_h}
\quad\text{for all } w_h\in V(h),\ v_h\in V_h.
\]
\end{lemma}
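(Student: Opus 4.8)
The plan is to bound $|a_h(w_h,v_h)|$ by estimating the defining expression \eqref{eq2020:kap_85_8631} summand by summand, applying the Cauchy--Schwarz inequality to every volume and face integral and pairing each of the two resulting factors with the matching contribution to $\|w_h\|_{V(h)}^2$ from \eqref{eq:defDGnorm} and to $\|v_h\|_{V_h}^2$ from \eqref{eq:dGnorm}. The \emph{volume terms} are harmless: $\sum_K(\vecK\nabla w_h,\nabla v_h)_K=\sum_K(\vecK^{1/2}\nabla w_h,\vecK^{1/2}\nabla v_h)_K\le\big(\sum_K\|\vecK^{1/2}\nabla w_h\|_{0,K}^2\big)^{1/2}\big(\sum_K\|\vecK^{1/2}\nabla v_h\|_{0,K}^2\big)^{1/2}\le\|w_h\|_{V(h)}\|v_h\|_{V_h}$, while $-(\vecc w_h,\nabla v_h)$ and $(rw_h,v_h)$ are bounded by $\|\vecc\|_\infty\|w_h\|_{0,\Om}|v_h|_{1,\Tnoh}$, resp.\ $\|r\|_\infty\|w_h\|_{0,\Om}\|v_h\|_{0,\Om}$, and then by the argument of Lemma~\ref{lem2020:kap_85_816'} (together with $k_0|v_h|_{1,\Tnoh}^2\le\|v_h\|_{V_h}^2$), which gives $\|w_h\|_{0,\Om}\le C\|w_h\|_{V(h)}$ and $|v_h|_{1,\Tnoh}+\|v_h\|_{0,\Om}\le C\|v_h\|_{V_h}$ with $C$ independent of $h$.

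The \emph{face diffusion and symmetrization terms} on $F\in\mathcal F\cup\mathcal F_3$ are the core of the estimate; here I would split the unit weight as $1=(h_F/\eta)^{1/2}(\eta/h_F)^{1/2}$. The term $-(\avg{\vecK\nabla w_h},\jump{v_h})_F$ is then bounded by the product of $(h_F/\eta)^{1/2}\|\avg{\vecK\nabla w_h}\|_{0,F}$ --- which, using $\vecK=\vecK^{1/2}\vecK^{1/2}$ and $\|\vecK\|_\infty$, is after summation controlled by the \emph{extra} term $\sum_{F\in\mathcal F\cup\mathcal F_3}(h_F/\eta)\|\avg{\vecK^{1/2}\nabla w_h}\|_{0,F}^2$ of $\|\cdot\|_{V(h)}$ --- and of $(\eta/h_F)^{1/2}\|\jump{v_h}\|_{0,F}$, which sums to $\le\|v_h\|_{V_h}$; it is precisely this term that forced the enlargement of $\|\cdot\|_{V_h}$ to $\|\cdot\|_{V(h)}$ in \eqref{eq:defDGnorm}. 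For $\theta(\avg{\vecK\nabla v_h},\jump{w_h})_F$ (with $|\theta|\le1$) the roles are exchanged: $(\eta/h_F)^{1/2}\|\jump{w_h}\|_{0,F}$ sums to $\le\|w_h\|_{V_h}\le\|w_h\|_{V(h)}$, while for the \emph{polynomial} $v_h$ the factor $(h_F/\eta)^{1/2}\|\avg{\vecK\nabla v_h}\|_{0,F}$ is estimated by $\|\vecK\|_\infty^{1/2}(h_F/\eta)^{1/2}\|\nabla v_h\|_{0,F}$ and then, by the discrete trace inequality \cite[Lemma~1.46]{DiPietro:12} (this is where shape- and contact-regularity of the family is needed), by $C\,\eta^{-1/2}\|\vecK\|_\infty^{1/2}k_0^{-1/2}C_\mathrm{tr}\|\vecK^{1/2}\nabla v_h\|_{0,\Delta(F)}$, the $h_F^{-1/2}$ from the trace inequality cancelling the $h_F^{1/2}$ weight; since every $K$ has at most $|\mathcal F_K|$ faces, summation gives $\le C\|v_h\|_{V_h}$. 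The penalty term $(\eta/h_F)(\jump{w_h},\jump{v_h})_F$ is dominated trivially by the two penalty terms of the norms.

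The \emph{remaining convective and reactive face and boundary terms} are handled exactly as in the coercivity computation preceding Lemma~\ref{lem2020:kap_85_816'}. On interior faces $(\cupw{w_h},\jump{v_h})_F$ is bounded, via $|\cupw{w_h}|\le|\vecc w_h|$ pointwise, by $(h_F/\eta)^{1/2}\|\vecc w_h\|_{0,F}\cdot(\eta/h_F)^{1/2}\|\jump{v_h}\|_{0,F}$, the first factor being absorbed into the last term $\sum_{F\in\overline{\mathcal F}}(h_F/\eta)\|\vecc w_h\|_{0,F}^2$ of $\|\cdot\|_{V(h)}$; on $\mathcal F_{2,1}$ one uses, just as in that computation, that $(\cupw{w_h},\jump{v_h})_F=(\talpha w_h,v_h)_F$ with $\talpha=\vecnu\cdot\vecc\ge0$, so this term together with the $\mathcal F_{2,2}$ term $(\talpha w_h,v_h)_F$ is estimated by Cauchy--Schwarz against the boundary quadratic terms $\tfrac12(|\vecc\cdot\vecnu|,v_h^2)_F$ (on $\mathcal F_{2,1}$) and $\tfrac12(2\talpha-\vecnu\cdot\vecc,v_h^2)_F$ (on $\mathcal F_{2,2}$) of $\|v_h\|_{V_h}^2$ and their $w_h$-counterparts in $\|w_h\|_{V(h)}^2$ (with a multiplicative trace inequality covering whatever part of the boundary $L^2$-norms these possibly degenerate terms do not see). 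Collecting all contributions then exhibits $\widetilde{M}_h$ as a fixed expression in $\eta^{-1}$, $\|\vecK\|_\infty$, $\|\vecc\|_\infty$, $\|r\|_\infty$, $k_0^{-1/2}$, $C_\mathrm{tr}$, $|\mathcal F_K|$, the constant of Lemma~\ref{lem2020:kap_85_816'}, and the mesh-regularity constants, none of which depends on $h$.

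I expect the main obstacle to be exactly the careful tracking of the powers of $h_F$ across all the face contributions: each face integral must be split so that the $h_F^{\pm1/2}$ from the chosen weight cancels the $h_F^{\mp1/2}$ produced by the applicable (discrete or multiplicative) trace inequality, and one must verify that the two terms of $\|\cdot\|_{V(h)}$ which are absent from $\|\cdot\|_{V_h}$ --- the averaged-flux term and the $\|\vecc w_h\|_{0,F}$ term --- are exactly the ones needed for the $w_h$-factors and are never paired with a factor of $v_h$ that $\|\cdot\|_{V_h}$ fails to control. Any misallocation here (or overlooking the identity $(\cupw{w_h},\jump{v_h})_F=(\talpha w_h,v_h)_F$ on $\mathcal F_{2,1}$) would leave a spurious negative power of $h$ and so destroy the $h$-uniformity of $\widetilde{M}_h$.
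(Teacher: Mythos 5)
Your overall architecture coincides with the paper's: term-by-term Cauchy--Schwarz, the weight splitting $(\eta/h_F)^{1/2}(h_F/\eta)^{1/2}$ on the face terms, the two extra summands of \eqref{eq:defDGnorm} absorbing exactly those $w_h$-traces that no discrete trace inequality can reach, the discrete trace inequality of \cite[Lemma~1.46]{DiPietro:12} for the polynomial factor $\avg{\vecK\nabla v_h}$, and the norm equivalence of Lemma~\ref{lem2020:kap_85_816'} for the volume $L^2$-terms. The one genuinely different sub-step is the convection: the paper integrates the volume term $-\sum_K(\vecc w_h,\nabla v_h)_K$ by parts and combines it with the upwind face term, so that on interior faces the two contributions merge into a downwind flux $(\vecc_{\mathrm{down}}(w_h),\jump{v_h})_F$, which is then controlled by the last term of \eqref{eq:defDGnorm}; you instead keep the volume term as it stands and dominate the upwind flux pointwise via $|\cupw{w_h}|\le|\vecc w_h|$ (with the appropriate one of the two traces), which the same norm contribution controls since both traces are counted there. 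Your route is more direct and equally valid. One caveat: for the boundary terms on $\mathcal F_{2,1}\cup\mathcal F_{2,2}$, the fallback device you name --- the elementwise multiplicative trace inequality --- does not deliver an $h$-uniform constant, since it yields $\|v_h\|_{0,F}^2\le C\,h_K^{-1}\|v_h\|_{1,K}^2$ and the product of the two factors then carries $h_K^{-1}$. What is needed when the weights $|\vecc\cdot\vecnu|$ resp.\ $2\talpha-\vecnu\cdot\vecc$ degenerate relative to $\talpha$ is a \emph{global} broken trace inequality of the form
\[
\|v\|^2_{0,\pOm}\le C\Big(\|v\|^2_{1,\Tnoh}+\sum_{F\in\mathcal F}h_F^{-1}\|\jump{v}\|^2_{0,F}\Big),
\]
whose right-hand side is in turn controlled by $\|v\|^2_{V_h}$ via Lemma~\ref{lem2020:kap_85_816'}. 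The paper's own proof is equally silent on this point (it simply asserts that the eighth term is handled by Cauchy--Schwarz ``using only $\|\cdot\|_{V_h}$''), so this is a shared omission rather than a defect of your argument alone, but the specific tool you invoke would not close it.
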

\begin{proof}
The representation of the discrete bilinear form $a_h$ in \eqref{eq2020:kap_85_8631}
can obviously be split into eight sums.
Based on the assumptions, the first three sums,
the sixth sum, and the eighth sum can be estimated by means of the the Cauchy--Schwarz--Bunyakovsky
inequality (using only $\|\cdot\|_{V_h}$).
In order to estimate the fourth and the fifth sums, we first introduce the factors
$(\mu/h_F)^{1/2}(h_F/\mu)^{1/2}$
and only then apply the Cauchy--Schwarz--Bunyakovsky inequality to control the terms
to the expense of the new terms in $\|\cdot\|_{V(h)}$.
For $w_h \in V_h$, the seventh sum can be estimated by means
of a discrete trace inequality as in the proof of Lemma~\ref{LEM:DG_stability}.
If $w_h \in V(h)$, the terms in the second sum are integrated by parts and combined with the seventh sum:
\[
\sum_{K \in\Tnoh} (\nabla \cdot (\vecc w_h),v_h)_K
- \sum_{F \in \overline{\mathcal F}} (\vecc,\jump{w_h v_h})_F
+ \sum_{F \in \mathcal F \cup \mathcal{F}_{2,1}} (\cupw{w_h},\jump{v_h})_F.
\]
Because of the product rule $\nabla \cdot (\vecc w_h) = \nabla \cdot \vecc w_h + \vecc\cdot\nabla w_h$,
the first sum can directly be controlled.
The other two terms either vanish at the boundary faces or can be controlled directly there.
At interior interfaces, we make use of the fact that both terms sum up
to the downwind flux $(\vecc_\textup{down}(w_h),\jump{v_h})_F$
(which is defined analogously to \eqref{eq2020:kap81_824}, \eqref{eq2020:kap81_825}),
and conclude that these terms can be controlled by the third term in \eqref{eq:defDGnorm}.
\end{proof}
To finish the discussion of convergence in the $\|\cdot\|_{V_h}$-norm,
we still need to demonstrate an estimate of the type
\begin{equation}\label{EQ:DG_energy_estimate}
\| u - \Pi u \|_{V(h)} \le C h^m |u|_{m+1,\Tnoh}
\end{equation}
for a suitably chosen element $\Pi u \in V(h)$.
A natural choice for $\Pi$ is the piecewise orthogonal $L^2$-projection.
To verify \eqref{EQ:DG_energy_estimate}, we proceed as follows.
Under the assumption that the family of partitions is shape- and contact-regular,
the left inequality in \eqref{eq:constantsindependent-1},
a multiplicative trace inequality \cite[Lemma 2.19]{Dolejsi:15} and Young's inequality imply
that there exists a constant $C>0$ independent of $h$ such that
\[
\|v\|^2_{V(h)} \le C \sum_{K \in \T_h} \left[ h_K^{-2} \| v
\|^2_{0,K} + | v |^2_{1,K} + h_K^2 | v |^2_{2,K}\right]
\quad\text{for all } v \in V(h).
\]
With the exception of the second term in \eqref{eq:dGnorm}, the estimation of the remaining terms
in \eqref{eq:dGnorm} is largely uncomplicated. We argue as follows:
\begin{align*}
\sum_{F \in \mathcal{F}} \frac{1}{h_F} \| \jump{v} \|^2_{0,F}
& \le C \sum_{K \in\Tnoh} \frac{1}{h_K}\Big[\| \nabla v \|_{0,K}
+ \frac{1}{h_K} \|v\|_{0,K}\Big] \|v\|_{0,K}\\
& \le C \sum_{K \in\Tnoh} \frac{1}{h^2_K} \big[\|v\|^2_{0,K}
+ \|\nabla v \|^2_{0,K}\big],
\end{align*}
where $C>0$ is a generic constant.
The middle term in \eqref{eq:defDGnorm} can be treated analogously by replacing
$\jump{v}$ by $\avg{\vecK^{1/2} \nabla v}$ and $h^{-1}_F$ by $h_F$ in the above estimate.
Now we are prepared to formulate and prove the convergence result in the energy norm.
\begin{theorem}
\label{theo:DG_conv_energy}
Assume that the family of compatible partitions is shape- and contact-regular,
and the coefficients $\vecK, \vecc$ are piecewise continuous.
Furthermore, let the conditions
\eqref{eq:coercivityconddviform}, \eqref{eq:coercivityconddviform2}, \eqref{EQ:stab_condition2},
and one of the conditions \eqref{eq:addcoerccond1} or \eqref{eq:addcoerccond2}
be satisfied.
If $k\in\N$ and the weak solution $u \in V\cap H^{m+1}(\Tnoh)$ with $1 \le m \le k$
of \eqref{eq:-1.33lin}--\eqref{eq2020:kap_32_334_ofdivergence}
satisfies \eqref{eq:7.17a}, then for the IPG solution $u_h\in V_h$ of \eqref{EQ:IPG_global}
the estimate
\[
\|u - u_h\|_{V_h} \le C h^m|u|_{m+1,\Tnoh}
\]
holds with a constant constant $C>0$ independent of $h$.
\end{theorem}
\begin{proof}
Making use of Remark \ref{folg:DG_analysis_conv},~2) with $W\subset H^{m+1}(\Tnoh)$
and Lemmata~\ref{LEM:stability_IPDG}, \ref{LEM:DG_stability},
\ref{LEM:continuity_in_orthogonal_spaces}, it remains to complete the estimate
\[
\inf_{w_h \in V_h} \| u - w_h \|_{V(h)} \le \| u - \Pi u \|_{V(h)}.
\]
This is possible thanks to \eqref{EQ:DG_energy_estimate}.
\end{proof}

\subsubsection*{Convergence order a weaker norms}
Theorem~\ref{theo:DG_conv_energy} trivially implies a (nonoptimal) $L^2$-convergence result.
\begin{theorem}
Let the assumptions of Theorem~\ref{theo:DG_conv_energy} be satisfied.
Then the IPG solution $u_h\in V(h)$ of \eqref{EQ:IPG_global} converges with order at least $m$ to
weak solution $u \in V\cap H^{m+1}(\Tnoh)$ with $1 \le m \le k$
of \eqref{eq:-1.33lin}--\eqref{eq2020:kap_32_334_ofdivergence}
with respect to the $L^2(\Om)$-norm:
\[
\|u - u_h\|_0 \le C h^m |u|_{m+1,\Tnoh}\,.
\]
\end{theorem}

A better result can be obtained by applying Theorem~\ref{l:weakererror}.
To do this we have to study the adjoint problems.
From Lemma~\ref{LEM:stability_IPDG} it is known that, for $k\in\N$,
the original (``primal'') IPG methods \eqref{EQ:IPG_global} are consistent.
If we succeed in showing that the corresponding discrete adjoint problems are also consistent,
then even the special case \eqref{eq2020:kap61_61781} of Theorem~\ref{l:weakererror} can be applied.

It is not difficult to show that, under the same conditions as for the original problem
(see Lemma~\ref{lem2020:kap_85_816'}),
the adjoint problem \eqref{eq:adjoint} with the forms \eqref{eq:adjoint1} possesses a unique solution $v=v_g\in V$.

The discrete adjoint forms read as
\[
\begin{aligned}
a_h^\prime(v_h,w_h) &:= a_h(w_h,v_h)
= \sum_{K \in\Tnoh} \big[(\vecK \nabla w_h - \vecc w_h, \nabla v_h)_K + (rw_h,v_h)_K\big]\\
& + \sum_{F \in \mathcal{F} \cup \mathcal{F}_3}
\Big[\theta(\avg{\vecK \nabla v_h},\jump{w_h})_F
- \Big(\avg{\vecK \nabla w_h}
- \frac{\mu}{h_F}\jump{w_h},\jump{v_h}\Big)_F\Big]\\
& + \sum_{F \in \mathcal{F} \cup \mathcal{F}_{2,1}} (\cupw{w_h},\jump{v_h})_F
+ \sum_{F \in \mathcal{F}_{2,2}} (\talpha w_h, v_h)_F\\
&= \sum_{K \in\Tnoh} \big[(\vecK \nabla v_h, \nabla w_h)_K
- (\vecc\cdot\nabla v_h, w_h)_K + (rv_h,w_h)_K\big]\\
& + \sum_{F \in \mathcal{F} \cup \mathcal{F}_3}
\Big[\theta(\avg{\vecK \nabla v_h},\jump{w_h})_F
- \Big(\jump{v_h},\avg{\vecK \nabla w_h}
- \frac{\mu}{h_F}\jump{w_h}\Big)_F\Big]\\
& + \sum_{F \in \mathcal{F} \cup \mathcal{F}_{2,1}} (\jump{v_h},\cupw{w_h})_F
+ \sum_{F \in \mathcal{F}_{2,2}} (\talpha v_h, w_h)_F\\
&= \sum_{K \in\Tnoh} \big[(\vecK \nabla v_h, \nabla w_h)_K
- (\vecc\cdot\nabla v_h, w_h)_K + (rv_h,w_h)_K\big]\\
& + \sum_{F \in \mathcal{F} \cup \mathcal{F}_3}
\Big[- (\avg{\vecK \nabla w_h,\jump{v_h}})_F
+\theta(\avg{\vecK \nabla v_h},\jump{w_h})_F
+ \frac{\mu}{h_F}(\jump{v_h},\jump{w_h})_F\Big]\\
& + \sum_{F \in \mathcal{F} \cup \mathcal{F}_{2,1}} (\jump{v_h},\cupw{w_h})_F
+ \sum_{F \in \mathcal{F}_{2,2}} (\talpha v_h, w_h)_F,\\
\ell_h(w_h) &:= \tilde\ell(w_h).
\end{aligned}
\]
To investigate the consistency we observe that
\begin{align*}
a_h^\prime(v,w_h) & - \tilde\ell_h(w_h)\\
= & \sum_{K \in\Tnoh} \big[(\vecK \nabla v, \nabla w_h)_K
- (\vecc\cdot\nabla v, w_h)_K + (rv,w_h)_K\big]\\
& + \sum_{F \in \mathcal{F} \cup \mathcal{F}_3}
\Big[- (\avg{\vecK \nabla w_h,\jump{v}})_F
+\theta(\avg{\vecK \nabla v},\jump{w_h})_F
+ \frac{\mu}{h_F}(\jump{v},\jump{w_h})_F\Big]\\
& + \sum_{F \in \mathcal{F} \cup \mathcal{F}_{2,1}} (\jump{v},\cupw{w_h})_F
+ \sum_{F \in \mathcal{F}_{2,2}} (\talpha v, w_h)_F - (g,w_h)\\
= & \sum_{K \in\Tnoh} \big[(\vecK \nabla v, \nabla w_h)_K
- (\vecc\cdot\nabla v, w_h)_K + (rv,w_h)_K\big]\\
& + \sum_{F \in \mathcal{F} \cup \mathcal{F}_3} \theta(\avg{\vecK \nabla v},\jump{w_h})_F\\
& + \sum_{F \in \mathcal{F}_{2,1}} (\jump{v},\cupw{w_h})_F
+ \sum_{F \in \mathcal{F}_{2,2}} (\talpha v, w_h)_F - (g,w_h).
\end{align*}
Here we have used that $\jump{v}_F=0$ on $F \in \mathcal{F} \cup \mathcal{F}_3$.
Next we integrate by parts the first term and obtain
\begin{align*}
a_h^\prime(v,w_h) & - \tilde\ell_h(w_h)\\
= & (- \nabla \cdot \left( \vecK\nabla v \right)- \vecc\cdot\nabla v + rv,w_h)
+\sum_{K \in\Tnoh} (\vecnu \cdot \vecK\nabla v , w_h)_{\partial K}\\
& + \sum_{F \in \mathcal{F} \cup \mathcal{F}_3} \theta(\avg{\vecK \nabla v},\jump{w_h})_F\\
& + \sum_{F \in \mathcal{F}_{2,1}} (\jump{v},\cupw{w_h})_F
+ \sum_{F \in \mathcal{F}_{2,2}} (\talpha v, w_h)_F - (g,w_h)\\
\stackrel{\eqref{eq:magicf}}{=} & (- \nabla \cdot \left( \vecK\nabla v \right) - \vecc\cdot\nabla v + rv,w_h)
+ \sum_{F\in \mathcal{F}}[(\avg{\vecK\nabla v},\jump{w_h})_F + (\jump{\vecK\nabla v},\avg{w_h})_F]\\
& + \sum_{F \in \partial\mathcal{F}} (\vecK\nabla v , \jump{w_h})_F
+ \sum_{F \in \mathcal{F} \cup \mathcal{F}_3} \theta(\avg{\vecK \nabla v},\jump{w_h})_F\\
& + \sum_{F \in \mathcal{F}_{2,1}} (\jump{v},\cupw{w_h})_F
+ \sum_{F \in \mathcal{F}_{2,2}} (\talpha v, w_h)_F - (g,w_h)\\
\stackrel{\eqref{eq:strongadjointde}}{=} &
\sum_{F\in \mathcal{F}}[(\avg{\vecK\nabla v},\jump{w_h})_F + (\jump{\vecK\nabla v},\avg{w_h})_F]\\
& + \sum_{F \in \partial\mathcal{F}} (\vecK\nabla v , \jump{w_h})_F
+ \sum_{F \in \mathcal{F} \cup \mathcal{F}_3} \theta(\avg{\vecK \nabla v},\jump{w_h})_F\\
& + \sum_{F \in \mathcal{F}_{2,1}} (\jump{v},\cupw{w_h})_F
+ \sum_{F \in \mathcal{F}_{2,2}} (\talpha v, w_h)_F.
\end{align*}
The second term in the first sum vanishes due to the regularity assumption
w.r.t.\ the adjoint solution $\vecK\nabla v \in H(\div;\Om)$ (analogously to \eqref{eq:7.17a}).
For the third term, it holds
\[
\sum_{F \in \partial\mathcal{F}} (\vecK\nabla v , \jump{w_h})_F
=\sum_{F \in \mathcal{F}_2 \cup \mathcal{F}_3} (\vecK\nabla v , \jump{w_h})_F
\]
thanks to the homogeneous boundary condition to $\vecK\nabla v \cdot \vecnu$
on $\wtGamma_1$.
Hence
\begin{align*}
a_h^\prime(v,w_h) - \tilde\ell_h(w_h)
= & (1+\theta)\sum_{F\in \mathcal{F} \cup \mathcal{F}_3}(\avg{\vecK\nabla v},\jump{w_h})_F\\
& + \sum_{F \in \mathcal{F}_{2,1}} (\vecK\nabla v , \jump{w_h})_F
+ \sum_{F \in \mathcal{F}_{2,1}} (\jump{v},\cupw{w_h})_F\\
& + \sum_{F \in \mathcal{F}_{2,2}} (\vecK\nabla v , \jump{w_h})_F
+ \sum_{F \in \mathcal{F}_{2,2}} (\talpha v, w_h)_F.
\end{align*}
Since $\vecnu\cdot\vecc = \talpha \ge 0$ on $\wtGamma_{2,1}$ by assumption
\eqref{eq:coercivityconddviform}, 2), all the boundary terms vanish due to the
homogeneous boundary conditions on $\wtGamma_2$, so that we arrive at the
representation
\begin{equation}\label{eq:discreteadjconsistency}
a_h^\prime(v,w_h) - \tilde\ell_h(w_h)
= (1+\theta)\sum_{F\in \mathcal{F} \cup \mathcal{F}_3}(\avg{\vecK\nabla v},\jump{w_h})_F.
\end{equation}
This shows that the adjoint discrete problem is consistent only for $\theta=-1$,
\ie, for the SIPG method.

The arguments from the proof Lemma~\ref{LEM:stability_IPDG} also apply to the adjoint problem in the SIPG case
and provide a consistent method with a unique solution such that a convergence order estimate analogous
to Theorem~\ref{theo:DG_conv_energy} is available.

Hence it is sufficient to estimate the consistency error term \eqref{eq2020:kap61_61781}, that is
\begin{equation}\label{eq:finalconserr}
\begin{aligned}
(a - a_h) (u, v_g)
&= \sum_{F \in \mathcal{F} \cup \mathcal{F}_3}
\Big[(\avg{\vecK \nabla v_g},\jump{u})_F
+ \Big(\avg{\vecK \nabla u}
- \frac{\mu}{h_F}\jump{u},\jump{v_g}\Big)_F\Big]\\
&\quad - \sum_{F \in \mathcal{F}} (\cupw{u},\jump{v_g})_F,
\end{aligned}
\end{equation}
where $u \in V$ and $v_g\in V$ are the weak solutions of \eqref{eq:-1.33lin}--\eqref{eq2020:kap_32_334_ofdivergence}
and \eqref{eq:specialadjoint}, respectively.
Now we can formulate the main result.
\begin{theorem}\label{THEO:conv_SIPG_l2}
Assume that the family of compatible partitions is shape- and contact-regular,
and the coefficients $\vecK, \vecc$ are piecewise continuous.
Furthermore, let the conditions
\eqref{eq:coercivityconddviform}, \eqref{eq:coercivityconddviform2}, \eqref{EQ:stab_condition2},
and one of the conditions \eqref{eq:addcoerccond1} or \eqref{eq:addcoerccond2}
be satisfied.
Let $u \in V\cap H^{k+1}(\Tnoh)$ be the weak solutions of \eqref{eq:-1.33lin}--\eqref{eq2020:kap_32_334_ofdivergence}
and $u_h\in V_h$ the discrete solution of the SIPG method.
Further assume that the solution $v_g$ of the adjoint problem \eqref{eq:specialadjoint}
is stable regular, \ie, for any right-hand side $g \in H^m(\Om)$, $0 \le m \le k-1$,
it belongs to $V\cap H^{m + 2}(\Tnoh)$ and satisfies the estimate
$\| v_g \|_{m +2, \Tnoh} \le C_s \|g \|_{m, \Om}\,$ with some constant $C_s>0$.
Finally, let the solutions $u$ and $v_g$ satisfy \eqref{eq:7.17a}.
Then, there exists a constant $C>0$ independent of $h$ such that
\[
\|u - u_h\|_{-m,\Om} \le C h^{k+m+1} |u|_{k+1}\Tnoh.
\]
\end{theorem}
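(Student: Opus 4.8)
The plan is to invoke Theorem~\ref{l:weakererror} with $Z:=H^m(\Om)$ (so that $Z'=H^{-m}(\Om)$ and $\|\cdot\|_{Z'}=\|\cdot\|_{-m,\Om}$), with $W:=V\cap H^{k+1}(\Tnoh)$ and $Y:=V\cap H^{m+2}(\Tnoh)$ as the regularity spaces for $u$ and for the adjoint solution $v_g$, and with $U(h)=V(h)=H^2(\Tnoh)$ carrying the norm $\|\cdot\|_{V(h)}$ of \eqref{eq:defDGnorm}. The preliminary facts I would assemble are: (i)~by Lemma~\ref{LEM:DG_stability}, $a_h$ is uniformly $V_h$-coercive, and so is $a_h^*$ with the same constant, so that $u_h$ and the discrete adjoint solution $v_{gh}\in V_h$ of \eqref{eq2020:kap61_617echt'''} exist and are unique; (ii)~by Lemma~\ref{LEM:stability_IPDG} the primal IPG method is consistent, and for the SIPG choice $\theta=-1$ the identity \eqref{eq:discreteadjconsistency} shows the \emph{discrete adjoint} problem to be consistent as well, \ie\ $a_h(u,v_h)=\ell_h(v_h)$ and $a_h(w_h,v_g)=\lb w_h,g\rb$ for all $v_h,w_h\in V_h$; and (iii)~the boundedness \eqref{eq:contofah} of the extended $a_h$ on $(W+V_h)\times(Y+V_h)$ with respect to $\|\cdot\|_{V(h)}\times\|\cdot\|_{V(h)}$, which I would obtain by re-running the proof of Lemma~\ref{LEM:continuity_in_orthogonal_spaces} with the second argument allowed to lie in $Y+V_h\subset V(h)$; this is easier than the general statement since $v_g\in V$ forces $\jump{v_g}=\bzero$ on $\mathcal{F}\cup\mathcal{F}_3$, so that only the $V_h$-component of the second argument still needs a discrete trace inequality. (The linear form $\ell_h$ extends trivially to $Y+V_h$.)

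The core step is to show that three of the four contributions in \eqref{eq2020:kap61_6174} drop out. The extra ingredient is that the primal consistency identity survives for a test function in $Y$: repeating the elementwise integration-by-parts computation from the proof of Lemma~\ref{LEM:stability_IPDG} --- which only uses $\jump{u}=\bzero$ on $\mathcal{F}$, the flux continuity \eqref{eq:7.17c}, $u\in V$, the $L^2$-validity of \eqref{eq:-1.33lin} with its boundary conditions, and $\jump{v_g}=\bzero$ on $\mathcal{F}\cup\mathcal{F}_3$ together with $v_g\in H^2(\Tnoh)$ --- gives $a_h(u,v_g)=\ell_h(v_g)$. Because the boundary data in \eqref{eq2020:kap_32_334_ofdivergence} are homogeneous, $\ell_h=\ell$ on $V$, so with the primal variational equation $a(u,v_g)=\ell(v_g)$ and the adjoint equation $a(u,v_g)=\lb u,g\rb$ one reads off: the third term of \eqref{eq2020:kap61_6174} vanishes since $a_h(u,v_g)=\ell_h(v_g)$ and, for $v_{gh}\in V_h$, $a_h(u,v_{gh})=\ell_h(v_{gh})$; the fourth term vanishes since $(a_h-a)(u,v_g)=\ell_h(v_g)-\ell(v_g)=0$ and $(\ell_h-\ell)(v_g)=0$; and the second term vanishes since discrete adjoint consistency gives $a_h(u_h,v_g)=\lb u_h,g\rb$ while $a_h(u,v_g)=\ell_h(v_g)=\ell(v_g)=\lb u,g\rb$. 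Thus \eqref{eq2020:kap61_6174} collapses to
\[
\|u-u_h\|_{-m,\Om}\;\le\;\widetilde{M}_h\,\|u-u_h\|_{V(h)}\;\sup_{g\in H^m(\Om)\setminus\{0\}}\frac{\|v_g-v_{gh}\|_{V(h)}}{\|g\|_{m,\Om}}\,.
\]

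It then remains to insert the two energy-type a priori estimates, both of which are instances of the analysis behind Theorem~\ref{theo:DG_conv_energy}. For the primal error I would apply that analysis to $u\in V\cap H^{k+1}(\Tnoh)$ but use Remark~\ref{folg:DG_analysis_conv},\,1) in place of part~2), so that the bound is measured in $\|\cdot\|_{V(h)}$: the consistency supremum vanishes because SIPG is consistent, the hypothesis $\|v_h\|_{V(h)}\le C\|v_h\|_{V_h}$ on $V_h$ follows from a discrete trace inequality and the refined Poincar\'e inequality, and the infimum is controlled by \eqref{EQ:DG_energy_estimate} for the $L^2$-projection; this yields $\|u-u_h\|_{V(h)}\le Ch^{k}|u|_{k+1,\Tnoh}$. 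For the adjoint error, the SIPG adjoint is again consistent and uniformly coercive, so the same analysis applies with $v_g\in H^{m+2}(\Tnoh)$ as the smooth solution; the attainable rate is $h^{m+1}$, which is precisely where the restriction $0\le m\le k-1$ (\ie\ $1\le m+1\le k$) enters, giving $\|v_g-v_{gh}\|_{V(h)}\le Ch^{m+1}|v_g|_{m+2,\Tnoh}\le CC_s\,h^{m+1}\|g\|_{m,\Om}$ by the assumed adjoint regularity. Substituting both into the displayed inequality and using that $\widetilde{M}_h$ is bounded in $h$ produces $\|u-u_h\|_{-m,\Om}\le Ch^{k+m+1}|u|_{k+1,\Tnoh}$, which is the claim (the seminorm printed in the statement as $|u|_{k+1}\Tnoh$ being $|u|_{k+1,\Tnoh}$).

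I expect the main obstacle to be item~(iii) together with the term-by-term bookkeeping of the second paragraph: one must first upgrade Lemma~\ref{LEM:continuity_in_orthogonal_spaces} to a continuity estimate for the extended $a_h$ that also controls the \emph{second} argument in $\|\cdot\|_{V(h)}$ (it is stated only for $V(h)\times V_h$), and then verify with care --- keeping track of the role of $\theta=-1$, of the weakly imposed homogeneous Dirichlet condition, and of the regularity \eqref{eq:7.17a} for both $u$ and $v_g$ --- that every consistency and approximation contribution in \eqref{eq2020:kap61_6174} genuinely cancels, so that only the product of the two optimal energy rates remains.
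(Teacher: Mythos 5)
Your proposal is correct and follows essentially the same route as the paper: the paper likewise applies Theorem~\ref{l:weakererror}, kills the second, third and fourth terms of \eqref{eq2020:kap61_6174} using the SIPG consistency of both the primal and the discrete adjoint problem together with $\jump{u}=\jump{v_g}=\bzero$ (its one-line proof just records that the residual face-sum $(a-a_h)(u,v_g)$ vanishes), and bounds the remaining term by the product of the two energy rates $h^k$ and $h^{m+1}$. Your write-up merely makes explicit some bookkeeping the paper leaves implicit (the extension of Lemma~\ref{LEM:continuity_in_orthogonal_spaces} to the second argument in $Y+V_h$, and the use of Remark~\ref{folg:DG_analysis_conv},\,1) to measure the primal error in $\|\cdot\|_{V(h)}$), which is consistent with the paper's intent.
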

\begin{proof}
From \eqref{eq:finalconserr} it can be seen that the regularity assumptions
together with the boundary conditions yield immediately
\[
(a-a_h)(u,v_g)=0.
\]
\end{proof}
\begin{remark}
According to \eqref{eq:discreteadjconsistency},
for other methods with $\theta\ne -1$,
the discretization of the adjoint problem is no longer consistent to the adjoint problem,
\ie, according to Theorem~\ref{l:weakererror}, the second term
The relationship \eqref{eq:discreteadjconsistency} indicates that the discretization of the adjoint problem
is no longer consistent to the adjoint problem if $\theta\ne -1$.
Then, according to Theorem~\ref{l:weakererror}, the second term in the bound \eqref{eq2020:kap61_6174},
that is
\[
a_h(u-u_h,v_g) - \tilde\ell_h(u-u_h)
= (1+\theta)\sum_{F\in \mathcal{F} \cup \mathcal{F}_3}(\avg{\vecK\nabla v_g},\jump{u-u_h})_F,
\]
has still be estimated appropriately.
\end{remark}

\section{Conclusion}
We presented a unified approach to the analysis of FEM for boundary value problems
with linear diffusion-convection-reaction equations and  boundary conditions of mixed type,
where neither conformity nor consistency properties are assumed.
Being elementary in nature, it clarifies and quantifies the interplay between
stability, approximation errors, and consistency errors
-- the theme guiding PDE numerical analysis from its very beginning.
As an example, we formulated and investigated two different stabilized discretizations
and obtained stability and optimal error estimates in energy-type norms and,
as a consequence of our generalization of the Aubin-Nitsche technique,
optimal error estimates in weaker norms.
We expect the described framework to provide guidelines to set up and analyze
further new stable and convergent schemes.

\end{document}